\theoremstyle{plain}
\newtheorem{theorem}{Theorem}[section]
\newtheorem{lemma}[theorem]{Lemma}
\newtheorem{corollary}[theorem]{Corollary}
\theoremstyle{remark}
\newtheorem{remark}[theorem]{Remark}
\begin{document}
\allowdisplaybreaks[4]
\numberwithin{figure}{section}
\numberwithin{table}{section}
 \numberwithin{equation}{section}
%
\title[Adaptive Quadratic FEM for Obstacle Problem]
 {A Posteriori and A Priori  Error Estimates of Quadratic Finite Element Method for Elliptic Obstacle Problem}
\author{Thirupathi Gudi}\thanks{The author's work is supported in part by the grant from DST Fast Track Project and in part by the UGC Center for Advanced Study}
\address{Department of Mathematics, Indian Institute of Science, Bangalore - 560012}
\email{gudi@math.iisc.ernet.in}

\author{Kamana Porwal}\thanks{The second author's work is supported in part by
the UGC center for Advanced Study and in part by the Council for
Scientific and Industrial Research (CSIR)}
\address{Department of Mathematics, Indian Institute of Science, Bangalore - 560012}
\email{kamana@math.iisc.ernet.in}

\date{}
\begin{abstract}
A residual based {\em a posteriori} error estimator is derived for
a quadratic finite element method (fem) for the elliptic obstacle
problem. The error estimator involves various residuals consisting
the data of the problem, discrete solution and a Lagrange
multiplier related to the obstacle constraint. {\em A priori}
error estimates for the Lagrange multiplier have been derived and
further under an assumption that the contact set does not
degenerate to a curve in any part of the domain, optimal order
{\em a priori} error estimates have been derived whenever the data
and the solution are sufficiently regular, precisely, under the
sufficient conditions required for quadratic fem in the case of
linear elliptic problem. The numerical experiments of adaptive fem
for a model problem satisfying the above condition on contact set
show optimal order convergence. This demonstrates that the
quadratic fem for obstacle problem can exhibit optimal
performance.
\end{abstract}
\keywords{finite element, quadratic fem, a posteriori error
estimate, obstacle problem, optimal error estimates, variational
inequalities, Lagrange multiplier}
\subjclass{65N30, 65N15}
\maketitle
\allowdisplaybreaks
\def\R{\mathbb{R}}
\def\cA{\mathcal{A}}
\def\cK{\mathcal{K}}
\def\cN{\mathcal{N}}
\def\p{\partial}
\def\O{\Omega}
\def\bbP{\mathbb{P}}
\def\cV{\mathcal{V}}
\def\cM{\mathcal{M}}
\def\cT{\mathcal{T}}
\def\cE{\mathcal{E}}
\def\bF{\mathbb{F}}
\def\bC{\mathbb{C}}
\def\bN{\mathbb{N}}
\def\ssT{{\scriptscriptstyle T}}
\def\HT{{H^2(\O,\cT_h)}}
\def\mean#1{\left\{\hskip -5pt\left\{#1\right\}\hskip -5pt\right\}}
\def\jump#1{\left[\hskip -3.5pt\left[#1\right]\hskip -3.5pt\right]}
\def\smean#1{\{\hskip -3pt\{#1\}\hskip -3pt\}}
\def\sjump#1{[\hskip -1.5pt[#1]\hskip -1.5pt]}
\def\jumptwo{\jump{\frac{\p^2 u_h}{\p n^2}}}

\section{Introduction}\label{sec:Intro}
Elliptic obstacle problem is a prototype model for the class of
elliptic variational inequalities of the first kind. The obstacle
problem is a nonlinear model describing the vertical displacement
of an object (with appropriate boundary conditions) constrained to
lie above an obstacle under a vertical force. The obstacle is a
given function with some smoothness. In general, the obstacle
problem exhibits a free boundary set (the boundary of the set
where the object touches the obstacle) where the regularity of the
solution being affected.  Therefore the numerical approximation of
an obstacle problem using uniform refinement will be inefficient.
Adaptive finite element methods (FEM), which compensate the
regularity, play an important role for these class of problems to
enhance the efficiency of the finite element method.

The application of finite element methods to obstacle problem
dates back to 1970's
\cite{Falk:1974:VI,BHR:1977:VI,Wang:2002:P2VI,Glowinski:2008:VI}.
The study of {\em a priori} error analysis for conforming linear
and quadratic finite element methods has been done in
\cite{Falk:1974:VI} and \cite{BHR:1977:VI}, respectively. In
\cite{Wang:2002:P2VI}, a refined error analysis for quadratic FEM
has been derived. The general convergence analysis and error
estimates for variational inequalities can be found in
\cite{Glowinski:2008:VI}. However, attention to the {\em a
posteriori} error analysis of FEMs for obstacle problem has begun
a decade and half ago
\cite{CN:2000:VI,Veeser:2001:VI,BC:2004:VI,Braess:2005:VI,NPZ:2010:VI,WW:2010:Apost}.
The first residual based {\em a posteriori} error estimator has
been derived in \cite{CN:2000:VI} for piecewise linear FEM  by
constructing a positivity preserving interpolation operator. The
{\em a posteriori} error analysis in \cite{Veeser:2001:VI} is
derived without using such a positivity preserving interpolation
operator. The error estimators in \cite{CN:2000:VI} and
\cite{Veeser:2001:VI} differ slightly from each other but it is
shown therein that both the estimators are reliable and efficient.
An averaging type error estimator is derived in \cite{BC:2004:VI}.
A simplified and abstract framework of error estimation for
conforming linear finite element methods can be found in
\cite{Braess:2005:VI,NPZ:2010:VI} when the obstacle is a global
affine function (Obstacle is $P_1(\Omega)$ function, where
$P_1(\Omega)$ is the space of linear polynomials restricted to
$\Omega$). The convergence of adaptive conforming linear finite
element method for obstacle problem is studied  in
\cite{BCH:2007:VI} for the first time. Recently discontinuous
Galerkin (DG) methods have been proposed and their {\em a priori}
error analysis has been derived in \cite{WHC:2010:DGVI}. More
recently, the {\em a posteriori} error analysis of linear DG
methods has been first studied in \cite{TG:2014:VIDG} and then
simplified in \cite{TG:2014:VIDG1}.

In this article, we derive a reliable {\em a posteriori} error
estimator for the quadratic FEM for elliptic obstacle problem. To
the best of our knowledge, this article is the first of such an
attempt. In the analysis, a discrete Lagrange multiplier is
introduced and used in a crucial way. {\em A priori} error
estimates derived in this article ensure the convergence of the
discrete Lagrange multiplier to the continuous one. Under an
assumption on the contact set, we derive optimal rate {\em a
priori} error estimates when the solution and the data are
sufficiently smooth. Numerical experiments for a model problem
with known solution illustrate the optimal rate of convergence
when adaptive algorithm is employed (since the solution of the
model problem is not regular enough, uniform refinement will not
yield optimal rate of convergence).

\par
\smallskip
Let $\Omega\subset \R^d (1\leq d \leq 3)$  be a bounded polyhedral
domain with boundary $\partial\Omega$. We assume that the obstacle
$\chi \in C(\bar\Omega)\cap H^1(\Omega)$ and satisfies
$\chi|_{\partial\Omega}\leq 0$. Then the closed and convex set
defined by
\begin{align*}
\cK=\{v\in H^1_0(\Omega): v\geq \chi \text{ a.e. in } \Omega\}
\end{align*}
is nonempty, since $\chi^+=\max\{\chi,0\}\in \cK$. The model
problem for the discussion below consists of finding $u\in \cK$
such that
\begin{align}\label{eq:MP}
a(u,v-u)\geq (f,v-u)\quad \forall v\in \cK,
\end{align}
where $a(u,v)=(\nabla u,\nabla v)$ and $f\in L^2(\Omega)$ is a
given function. Here and after, $(\cdot,\cdot)$ denotes the
$L^2(\Omega)$ inner-product while $\|\cdot\|$ denotes the
$L^2(\Omega)$ norm. The existence of a unique solution to
\eqref{eq:MP} follows from the result of Stampacchia
\cite{AH:2009:VI,Glowinski:2008:VI,KS:2000:VI}.

\par
\noindent For the error analysis, we need the Lagrange multiplier
 $\sigma\in H^{-1}(\Omega)$ defined by
\begin{align}\label{eq:sigmadef}
\langle \sigma, v\rangle =(f,v)-a(u,v)\quad \forall v\in
H^1_0(\Omega),
\end{align}
where $\langle \cdot,\cdot\rangle$ denotes the duality pair of
$H^{-1}(\Omega)$ and $H^1_0(\Omega)$. It follows from
\eqref{eq:sigmadef} and \eqref{eq:MP}, that
\begin{equation}\label{eq:sigma}
\langle \sigma, v-u\rangle\leq 0\quad  \forall v \in \cK.
\end{equation}

\par
The rest of the article is organized as follows. In Section
\ref{sec:DP}, we define the discrete problem and discuss
corresponding results. Section \ref{sec:Aposteriori} is devoted to
{\em a posteriori} error analysis. In Section \ref{sec:apriori},
we revisit the {\em a priori} error analysis of quadratic fem for
the obstacle problem, therein, we derive optimal order error
estimates for the solution and the Lagrange multiplier under some
conditions on the contact set. We present some numerical
experiments in Section \ref{sec:Numerics}. Finally, we conclude
the article in Section \ref{sec:Conclusions}.

\section{Discrete Problem}\label{sec:DP}
Below, we list the notation that will be used throughout the
article:
\begin{align*}
\cT_h &=\text{a regular simplicial triangulations of } \Omega\\
T&=\text{a triangle of } \cT_h,\qquad |T|=\text{ area of } T\\
 h_T &=\text{diameter of } T, \qquad h=\max\{h_T : T\in\cT_h\}\\
\cV_h^i&=\text{set of all vertices in } \cT_h \text{ that are in }
\O\\
\cV_T&=\text{set of three vertices of } T\\
\cE_h^i&=\text{set of all interior edges of } \cT_h\\
\cM_h^i&=\text{set of midpoints of interior edges in} \cT_h\\
\cM_T&=\text{set of midpoints of three edges of } T\\
h_e&=\text{length of an edge } e\in\cE_h.
\end{align*}
We mean by regular triangulation that there are no hanging nodes
in $\cT_h$ and the triangles in $\cT_h$ are shape-regular.
Further, we assume that each triangle $T$ in $\cT_h$ is closed.

In order to define the jump and mean of discontinuous functions
conveniently, define a broken Sobolev space
\begin{eqnarray*}
H^1(\O,\cT_h)=\{v\in L^2(\Omega) :\,v_{\ssT}= v|_{T}\in
H^1(T)\quad\forall\,~T\in{\mathcal T}_h\}.
\end{eqnarray*}
\par
For any $e\in\cE_h^i$, there are two triangles $T_+$ and $T_-$
such that $e=\partial T_+\cap\partial T_-$. Let $n_-$ be the unit
normal of $e$ pointing from $T_-$ to $T_+$, and $n_+=-n_-$. For
any $v\in H^1(\Omega,{\mathcal T}_h)$, we define the jump and mean
of $v$ on $e$ by
\begin{eqnarray*}
 \sjump{v} = v_-+v_+,\,\,\mbox{and}\,\,\smean{v} = \frac{1}{2}(v_-+v_+),\,\mbox{respectively,}
\end{eqnarray*}
 where $v_\pm=v\big|_{T_\pm}$.
Similarly define for $w\in H^1(\Omega,{\mathcal T}_h)^2$ the jump
and mean of $w$ on $e\in\cE_h^i$ by
\begin{eqnarray*}
 \sjump{w} = w_-\cdot n_-+w_+\cdot n_+,\,\,\mbox{and}\,\,\smean{w} = \frac{1}{2}(w_-+w_+),\,\mbox{respectively,}
\end{eqnarray*}
 where $w_\pm=w |_{T_\pm}$.

\par
For any edge $e\in \cE_h^b$,  there is a triangle $T\in\cT_h$ such
that $e=\partial T\cap \partial\Omega$. Let $n_e$ be the unit
normal of $e$ that points outside $T$. For any $v\in H^1(T)$, we
set on $e\in \cE_h^b$
\begin{eqnarray*}
 \sjump{v}= v,\,\,\mbox{and}\,\,\smean{v}=v,
\end{eqnarray*}
and for $w\in H^1(T)^2$,
\begin{eqnarray*}
\sjump{w}= w\cdot n_e,\,\,\mbox{and}\,\,\smean{w}=w.
\end{eqnarray*}

\subsection{Discrete Spaces}
The quadratic finite element space $V_h$ is defined by
$$V_h=\{v_h\in H^1_0(\Omega): v_h|_{T} \in \bbP_2(T) \quad \forall T\in\cT_h\}.$$
For the convenience of subsequent discussion, let $\{\psi_z,
z\in\cV_h^i\cup\cM_h^i\}$ be the canonical basis of $V_h$, i.e,
for $q\in\cV_h^i\cup\cM_h^i$
\begin{equation*}
\psi_z (q) := \left\{ \begin{array}{ll} 1 & \text{ if } z = q,\\\\
0 & \text{otherwise}.
\end{array}\right.
\end{equation*}
We need some more discrete spaces for the analysis to be followed.
Let
\begin{align*}
W_h=\text{Span}\{\psi_z\in V_h : z\in \cM_h^i\}.
\end{align*}
The subspace $W_h^c$ of $V_h$ defined by
\begin{align*}
W_h^c=\text{Span}\{\psi_z\in V_h : z\in \cV_h^i\},
\end{align*}
is the orthogonal complement of $W_h$ in $V_h$ with respect to the
inner product:
\begin{equation*}
\langle w_h,v_h\rangle_{V_h} =
\sum_{T\in\cT_h}\frac{|T|}{3}\left(\sum_{z\in\cM_T}w_h(z)v_h(z)+\sum_{z\in\cV_T}w_h(z)v_h(z)\right).
\end{equation*}
Then, we have $V_h=W_h\oplus W_h^c$. Let $V_{nc}$ be the
Crouziex-Raviart $P_1$-nonconforming space defined by
\begin{equation*}
V_{nc}=\{v_h\in L^2(\Omega) : v_h|_T\in P_1(T)\,\,\forall\, T\in
{\mathcal T}_h,\; \sjump{v}(z)=0\,\,\forall\,z\in \cM_h\}.
\end{equation*}
Let $\{\phi_z, z\in\cM_h^i\}$  be the canonical basis of $V_{nc}$,
i.e, for $q\in\cM_h^i$
\begin{equation*}
\phi_z (q) := \left\{ \begin{array}{ll} 1 & \text{ if } z = q,\\\\
0 & \text{otherwise}.
\end{array}\right.
\end{equation*}
Define an interpolation  $\Pi_h:V_{nc}\rightarrow W_h$ by
\begin{equation*}
\Pi_h v=\sum_{z\in\cM_h^i} v(z)\psi_z, \quad v\in V_{nc},
\end{equation*}
where $\psi_z\in W_h$ is a canonical basis function of $W_h$. It
holds that
\begin{equation*}
\Pi_h v(z)=v(z)\text{ for } z\in \cM_h \text{ and } v\in V_{nc}.
\end{equation*}
Note that $\Pi_h:V_{nc}\rightarrow W_h$ is bijective and hence its
inverse $\Pi^{-1}_h:W_h\rightarrow V_{nc}$ exists and it is given
by
\begin{equation*}
\Pi_h^{-1} v=\sum_{z\in\cM_h^i} v(z)\phi_z, \quad v\in W_h,\quad
\phi_z\in V_{nc},
\end{equation*}
where $\phi_z\in V_{nc}$ is a canonical basis function of
$V_{nc}$. Indeed $\Pi_h^{-1}$ extends to whole $V_h$  by defining
\begin{equation*}
\Pi_h^{-1} v=\sum_{z\in\cM_h^i} v(z)\phi_z, \quad v\in V_h.
\end{equation*}
For any $v\in V_h$, let $v=v_1+v_2$ where $v_1\in W_h$ and $v_2\in
W_h^c$. Then it is clear that  $\Pi_h^{-1} v=\Pi_h^{-1} v_1$.

\par
\noindent The following lemma determines the approximation
properties of $\Pi_h^{-1}$.

\begin{lemma}\label{lem:SigmaQuad} It holds that
\begin{equation*}
\|v_h-\Pi^{-1}_hv_h\|_{L^2(T)}\leq  C h_T\|\nabla
v_h\|_{L^2(T)}\quad \forall\,v_h\in V_h.
\end{equation*}
\end{lemma}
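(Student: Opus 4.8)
The estimate is stated element by element, so the plan is to fix a triangle $T\in\cT_h$ and reduce everything to a scaling argument on a fixed reference triangle. The first step is to record the local shape functions: on $T$ with barycentric coordinates $\lambda_1,\lambda_2,\lambda_3$ and edge midpoints $m_1,m_2,m_3$, where $m_i$ denotes the midpoint of the edge opposite the vertex carrying $\lambda_i$, a direct check of the nodal conditions shows that the Crouzeix--Raviart basis restricts to $\phi_{m_i}|_T=1-2\lambda_i$. Using this I would first prove the local identity
\[
\Pi_h^{-1}v_h\big|_T=Q_Tv_h:=\sum_{i=1}^3 v_h(m_i)\,(1-2\lambda_i)\qquad\forall\,v_h\in V_h .
\]
For interior edge midpoints this is immediate from the definition of $\Pi_h^{-1}$. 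For a boundary midpoint $m_i\notin\cM_h^i$ the global function $\phi_{m_i}$ is absent from the defining sum, but since $V_h\subset H^1_0(\O)$ we have $v_h(m_i)=0$ there, so the missing term vanishes and the full three-term formula still holds on boundary triangles.

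Next I would exploit constant reproduction. Since $\sum_{i=1}^3(1-2\lambda_i)=3-2=1$, the local operator satisfies $Q_T c=c$ for every constant $c$, so $I-Q_T$ annihilates $\bbP_0(T)$. Because $Q_T$ is built from point evaluations at midpoints together with the affine-invariant functions $1-2\lambda_i$, it commutes with the affine pull-back $F_T\colon\hat T\to T$; that is, the image of $v_h-Q_Tv_h$ on the reference element is exactly $\hat v-\hat Q\hat v$, where $\hat Q$ is the analogous operator on $\hat T$.

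On the reference triangle the map $\hat v\mapsto\|\hat v-\hat Q\hat v\|_{L^2(\hat T)}$ is a seminorm on the finite-dimensional space $\bbP_2(\hat T)$ that vanishes on $\bbP_0(\hat T)$, while $|\cdot|_{H^1(\hat T)}$ induces a norm on the quotient $\bbP_2(\hat T)/\bbP_0(\hat T)$; equivalence of norms on a finite-dimensional space then yields $\|\hat v-\hat Q\hat v\|_{L^2(\hat T)}\le\hat C\,|\hat v|_{H^1(\hat T)}$. Transporting this back through $F_T$ and using shape-regularity to bound $\|DF_T\|$, $\|DF_T^{-1}\|$ and $|\det DF_T|$ by the appropriate powers of $h_T$, the standard scaling of the $L^2$-norm and the $H^1$-seminorm converts the reference estimate into
\[
\|v_h-\Pi_h^{-1}v_h\|_{L^2(T)}\le C\,h_T\,|v_h|_{H^1(T)}=C\,h_T\,\|\nabla v_h\|_{L^2(T)},
\]
which is the claim.

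The only genuinely non-routine point is the boundary bookkeeping in the local identity $\Pi_h^{-1}v_h|_T=Q_Tv_h$: it is essential that $V_h\subset H^1_0(\O)$, for otherwise a boundary triangle would contribute only the interior midpoint terms, the sum of $1-2\lambda_i$ over those interior midpoints would no longer equal $1$, constant reproduction would fail there, and the clean first-order bound would break down. Everything else---the explicit Crouzeix--Raviart shape functions, the finite-dimensional norm equivalence, and the affine scaling---is routine.
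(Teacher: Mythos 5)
Your proof is correct, but it follows a genuinely different (if equally standard) route from the paper's. The paper's proof is a two-step affair: since $\Pi_h^{-1}$ reproduces $\bbP_1(T)$ locally, the Bramble--Hilbert lemma gives the second-order bound $\|v_h-\Pi_h^{-1}v_h\|_{L^2(T)}\le C h_T^2|v_h|_{H^2(T)}$, and an inverse inequality on $\bbP_2(T)$ then trades one power of $h_T$ for one derivative. You instead use only constant reproduction (via $\sum_i(1-2\lambda_i)=1$), bound the resulting seminorm on $\bbP_2(\hat T)/\bbP_0(\hat T)$ by the $H^1$-seminorm through finite-dimensional norm equivalence, and scale back --- arriving at the first-order estimate in one pass with no inverse inequality. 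What the paper's route buys is brevity given the standard toolkit; what yours buys is self-containedness, an explicit identification of the local operator $Q_Tv=\sum_i v(m_i)(1-2\lambda_i)$, and a careful treatment of boundary triangles (where the global sum defining $\Pi_h^{-1}$ omits boundary midpoints but $v_h\in H^1_0(\Omega)$ makes the missing terms vanish), a point the paper's one-line proof uses implicitly but never states. Both arguments are sound; yours slightly under-sells itself in that $Q_T$ in fact reproduces all of $\bbP_1(T)$, which is what the paper exploits, but $\bbP_0$-reproduction is all you need for the claimed first-order bound.
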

\begin{proof}
Using the Bramble-Hilbert Lemma \cite{BScott:2008:FEM} and an
inverse inequality, we find
\begin{equation*}
\|v_h-\Pi^{-1}_hv_h\|_{L^2(T)}\leq  C h_T^2|v_h|_{H^2(T)}\leq C
h_T\|\nabla v_h\|_{L^2(T)}.
\end{equation*}
This completes the proof.
\end{proof}

\subsection{Discrete Problem}
Define the discrete set
\begin{align}\label{eq:Kh}
\cK_h=\{v_h\in V_h: v_h(z)\geq \chi(z)\quad \forall z\in\cM_h\}.
\end{align}
The discrete problem consists of finding $u_h\in\cK_h$ such that
\begin{align}\label{eq:DP}
a(u_h,v_h-u_h)\geq (f,v_h-u_h)\quad \forall v_h\in \cK_h.
\end{align}
This method is introduced in \cite{BHR:1977:VI}. As in the case of
continuous problem \eqref{eq:MP}, the discrete problem
\eqref{eq:DP} has a unique solution.
\par
Note that for any $z\in \cV_h^i$ and the corresponding basis
function $\psi_z$, we have $\pm\psi_z \in \cK_h$. Then
\eqref{eq:DP} implies
\begin{align}\label{eq:DPProperty}
a(u_h,\psi_z)=(f,\psi_z) \quad \text{ for } z \in \cV_h^i.
\end{align}
Taking $v=u_h+\psi_z$ for any $z\in \cM_h^i$, we find from
\eqref{eq:DP} that
\begin{align}\label{eq:DPProperty1}
a(u_h,\psi_z)\geq (f,\psi_z) \quad \text{ for } z \in \cM_h^i.
\end{align}
Furthermore, it holds that
\begin{align}\label{eq:DPProperty2}
a(u_h,\psi_z)=(f,\psi_z) \quad \text{ for } z \in \{z \in\cM_h^i
:u_h(z)>\chi(z)\}.
\end{align}

\section{A Posteriori Error Estimates}\label{sec:Aposteriori}
\par
In the {\em a posteriori} error analysis below, we require a
discrete Lagrange multiplier $\sigma_h$ analogous to $\sigma$ in
\eqref{eq:sigmadef}.
Define $\sigma_h\in V_{nc}$ by
\begin{align}\label{eq:sigmahdef}
\langle\sigma_h,v_h\rangle_h=(f,\Pi_hv_h)-a(u_h,\Pi_hv_h)\quad
\forall v_h\in V_{nc},
\end{align}
where $\langle\cdot,\cdot\rangle_h$ is defined by
\begin{equation*}
\langle w_h,v_h\rangle_h =
\sum_{T\in\cT_h}\frac{|T|}{3}\sum_{z\in\cM_T}w_h(z)v_h(z).
\end{equation*}
Since $\langle \cdot,\cdot\rangle_h$ defines an inner-product on
$V_{nc}$, we have $\sigma_h$ well-defined. From \cite[Chapter
4]{Ciarlet:1978:FEM}, note that
\begin{equation}\label{eq:QuadExact}
\int_T v_h\,dx = \frac{|T|}{3}\sum_{z\in\cM_T}v_h(z)\quad \forall
v_h\in P_2(T).
\end{equation}

\begin{remark}
The choice of $\sigma_h$ in \eqref{eq:sigmahdef} is motivated by
two facts. First, the discrete set $\cK_h$ has constraints at the
midpoints of all the edges. This fact should be incorporated in
the definition of $\sigma_h$ which can be seen in the properties
of $\sigma_h$ in Lemma \ref{lem:propssigma} below. These
properties are very useful in our {\em a posteriori} error
analysis. Second, $\sigma_h$ should be a good approximation of
$\sigma$. The approximation properties of $\sigma_h$ in Section
\ref{sec:apriori} realizes this.
\end{remark}

\par
\noindent In the following lemma, we derive some useful properties
of $\sigma_h$:

\begin{lemma}\label{lem:propssigma} There hold
\begin{align}
\sigma_h(z) &\leq 0\quad \forall z\in\cM_h^i,
\label{eq:sigmah-at-z}\\
 \sigma_h(z) & = 0 \quad \text{ for } z \in
\{z \in\cM_h^i :u_h(z)>\chi(z)\}.\label{eq:sigmah-at-zz}
\end{align}
\end{lemma}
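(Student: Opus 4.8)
The plan is to test the defining relation \eqref{eq:sigmahdef} of $\sigma_h$ with the canonical basis function $\phi_z\in V_{nc}$ for a fixed interior midpoint $z\in\cM_h^i$, and to read off the nodal value $\sigma_h(z)$ from both sides. The whole argument reduces the two claimed properties to the midpoint characterizations \eqref{eq:DPProperty1} and \eqref{eq:DPProperty2} of the discrete solution, which is precisely why $\sigma_h$ was defined through $\Pi_h$.

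First I would evaluate the left-hand side. Since $\phi_z(w)=0$ for every midpoint $w\neq z$ and $\phi_z$ vanishes at all boundary midpoints, the only surviving terms in $\langle\sigma_h,\phi_z\rangle_h=\sum_{T\in\cT_h}\frac{|T|}{3}\sum_{w\in\cM_T}\sigma_h(w)\phi_z(w)$ come from the two triangles $T_+,T_-$ sharing the edge whose midpoint is $z$, giving
\[
\langle\sigma_h,\phi_z\rangle_h=\sigma_h(z)\,\frac{|T_+|+|T_-|}{3}.
\]
For the right-hand side I would use the identity $\Pi_h\phi_z=\sum_{w\in\cM_h^i}\phi_z(w)\psi_w=\psi_z$, which follows directly from the definition of $\Pi_h$ and the nodal property of $\phi_z$. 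Hence the right-hand side of \eqref{eq:sigmahdef} equals $(f,\psi_z)-a(u_h,\psi_z)$.

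Combining the two computations yields $\sigma_h(z)\,\frac{|T_+|+|T_-|}{3}=(f,\psi_z)-a(u_h,\psi_z)$. The weight $\frac{|T_+|+|T_-|}{3}$ is strictly positive, so the sign of $\sigma_h(z)$ coincides with that of $(f,\psi_z)-a(u_h,\psi_z)$. Property \eqref{eq:DPProperty1} gives $a(u_h,\psi_z)\geq(f,\psi_z)$, whence the right-hand side is nonpositive and $\sigma_h(z)\leq 0$, establishing \eqref{eq:sigmah-at-z}. For \eqref{eq:sigmah-at-zz}, if $u_h(z)>\chi(z)$ then \eqref{eq:DPProperty2} upgrades this to the equality $a(u_h,\psi_z)=(f,\psi_z)$, so the right-hand side vanishes and therefore $\sigma_h(z)=0$.

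There is no serious obstacle here; the computation is essentially bookkeeping. The one point that must be handled with care is the evaluation of the discrete inner product $\langle\cdot,\cdot\rangle_h$ over the correct set of midpoints, together with the identity $\Pi_h\phi_z=\psi_z$; once these are in place, both claims follow immediately from \eqref{eq:DPProperty1} and \eqref{eq:DPProperty2}.
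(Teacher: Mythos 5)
Your proof is correct and takes essentially the same route as the paper's: the paper tests \eqref{eq:sigmahdef} with $v_h=\Pi_h^{-1}\psi_z$, which is precisely your $\phi_z$, and concludes from \eqref{eq:DPProperty1} and \eqref{eq:DPProperty2}. You have simply written out the bookkeeping (the positive weight $\tfrac{|T_+|+|T_-|}{3}$ and the identity $\Pi_h\phi_z=\psi_z$) that the paper leaves implicit.
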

\begin{proof} Taking $v_h=\Pi_h^{-1}\psi_z$ for $z\in\cM_h^i$ in
\eqref{eq:sigmahdef} and using \eqref{eq:DPProperty1}, we find
that $\sigma_h(z)\leq 0$ for any $z\in\cM_h^i$. If
$u_h(z)>\chi(z)$ for any $z\in\cM_h^i$, then it holds from
\eqref{eq:DPProperty2} that $\sigma_h(z)= 0$. This completes the
proof.
\end{proof}

\par
\noindent For given $v\in L^2(\Omega)$, define the piecewise
constant (with respect to the triangulation) approximation $\bar
v$  by the following:
\begin{align*}
\bar v|_T=\frac{1}{|T|}\int_T v\,dx.
\end{align*}
It is well-known from \cite{BScott:2008:FEM,Ciarlet:1978:FEM} that
\begin{align*}
\|v_h-\bar v_h\|_{L^2(T)}\leq C h_T \|\nabla v\|_{L^2(T)} \quad
\forall\; v\in H^1(T),\quad\forall \;T\in\cT_h.
\end{align*}

\par
\noindent Define the following sets:
\begin{align*}
\bC_h=\{ T\in\cT_h: \text{For all } z\in\cM_T,\; u_h(z)=\chi(z)\},
\end{align*}
\begin{align*}
\bN_h=\{ T\in\cT_h: \text{For all } z\in\cM_T,\; u_h(z)>\chi(z)\},
\end{align*}
and
\begin{align*}
\bF_h=\{ T\in\cT_h: \exists\; z_1,\;z_2\in\cM_T \text{ such that }
u_h(z_1)=\chi(z_1) \text{ and } u_h(z_2)>\chi(z_2)\}.
\end{align*}
We call $\bC_h$, $\bN_h$ and $\bF_h$ as contact, non-contact and
free boundary set, respectively.

\par
\smallskip
\noindent Define the following estimators:
\begin{align*}
\eta_1&=\left(\sum_{T\in \cT_h} h_T^2 \|\Delta u_h+f-\sigma_h\|_{L^2(T)}^2\right)^{1/2},\\
 \eta_2&=\left(\sum_{e\in\cE_h^i}h_e\|\sjump{\nabla u_h}\|_{L^2(e)}^2\right)^{1/2},\\
\eta_3&=\left(\sum_{T\in \cT_h} h_T^2
\|\sigma_h-\bar\sigma_h\|_{L^2(T)}^2\right)^{1/2},\\
 \text{ and }\quad  \eta_4&=Osc(f,{\cT}_h),
\end{align*}
where the oscillations $Osc(f,{\mathcal D}_h)$ of $f$ over
${\mathcal D}_h\subseteq\cT_h$  is defined by
$$Osc(f,{\mathcal D}_h)=\left(\sum_{T\in {\mathcal D}_h} h_T^2\min_{c\in
P_0(T)}\|f-c\|_{L^2(T)}^2\right)^{1/2}.$$
The following lemma is a consequence of \eqref{eq:QuadExact} and
Lemma \ref{lem:propssigma}:
\begin{lemma}\label{lem:sigmaprops}
There hold
\begin{align*}
\bar\sigma_h &\leq 0 \text{ everywhere on } \Omega,\\
\bar\sigma_h &=0 \text{ on } \bN_h.
\end{align*}
\end{lemma}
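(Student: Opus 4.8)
The plan is to reduce the piecewise-constant average $\bar\sigma_h$ to an arithmetic mean of the midpoint values of $\sigma_h$, and then read off the two assertions directly from Lemma \ref{lem:propssigma}. The enabling observation is that $\sigma_h\in V_{nc}$, so that on every $T\in\cT_h$ the restriction $\sigma_h|_T$ is a $P_1(T)$ polynomial and in particular belongs to $P_2(T)$. Hence the quadrature identity \eqref{eq:QuadExact} is exact for $\sigma_h$, and I would compute
\[
\bar\sigma_h|_T=\frac{1}{|T|}\int_T\sigma_h\,dx=\frac{1}{|T|}\cdot\frac{|T|}{3}\sum_{z\in\cM_T}\sigma_h(z)=\frac{1}{3}\sum_{z\in\cM_T}\sigma_h(z).
\]
With this identity in hand, both statements become assertions about the sign of $\sigma_h$ at the three midpoints of $T$.

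For the first claim I would split $\cM_T$ into interior and boundary midpoints. If $z\in\cM_h^i$, then \eqref{eq:sigmah-at-z} gives $\sigma_h(z)\leq 0$; if $z$ lies on $\partial\Omega$, the defining constraint $\sjump{v}(z)=0$ of $V_{nc}$ forces $\sigma_h(z)=0$. Either way every summand above is nonpositive, so $\bar\sigma_h|_T\leq 0$. Since $T$ is arbitrary and $\bar\sigma_h$ is piecewise constant, this yields $\bar\sigma_h\leq 0$ everywhere on $\Omega$.

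For the second claim I would take $T\in\bN_h$, so that $u_h(z)>\chi(z)$ at every $z\in\cM_T$ by definition of $\bN_h$. At each interior midpoint this triggers \eqref{eq:sigmah-at-zz}, giving $\sigma_h(z)=0$, and at each boundary midpoint one again has $\sigma_h(z)=0$ from the $V_{nc}$ constraint. Thus every term in the average vanishes and $\bar\sigma_h=0$ on $\bN_h$.

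The argument is essentially elementary, and I do not expect a genuine obstacle. The only points needing care are the bookkeeping at boundary midpoints, where one must invoke the homogeneous structure built into $V_{nc}$ rather than Lemma \ref{lem:propssigma}, and the recognition that $\sigma_h$ being a $P_1$ function (not merely $P_2$) is exactly what makes the exact quadrature \eqref{eq:QuadExact} applicable here.
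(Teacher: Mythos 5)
Your proof is correct and follows exactly the route the paper intends: the paper states Lemma \ref{lem:sigmaprops} without proof as ``a consequence of \eqref{eq:QuadExact} and Lemma \ref{lem:propssigma}'', and your argument simply fills in that outline by writing $\bar\sigma_h|_T$ as the arithmetic mean of the midpoint values (exact since $\sigma_h|_T\in P_1(T)\subset P_2(T)$) and reading off the signs from \eqref{eq:sigmah-at-z}--\eqref{eq:sigmah-at-zz}, with the boundary midpoints handled by the vanishing jump condition built into $V_{nc}$.
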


Below, we derive a relation between the estimators.

\begin{lemma}\label{lem:sigmaosc}
Let $T\in \cT_h$. Then
\begin{equation*}
\eta_3 \leq C \left(\eta_1+\eta_4\right).
\end{equation*}
\end{lemma}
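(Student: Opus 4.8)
The plan is to work triangle by triangle and exploit the fact that $u_h|_T \in \bbP_2(T)$ forces $\Delta u_h$ to be \emph{constant} on each $T$, so it disappears under the averaging operator. Concretely, I would introduce the elementwise residual $r_T := \Delta u_h + f - \sigma_h$ on $T$, whose weighted $L^2$-norm is exactly what $\eta_1$ measures. Writing $\sigma_h = \Delta u_h + f - r_T$ and applying the averaging $\bar{(\cdot)}$, linearity gives $\bar\sigma_h = \overline{\Delta u_h} + \bar f - \bar r_T$. Since $\Delta u_h$ is constant on $T$ we have $\overline{\Delta u_h} = \Delta u_h$, and subtracting yields the key identity
\begin{equation*}
\sigma_h - \bar\sigma_h = (f - \bar f) - (r_T - \bar r_T) \quad \text{on } T.
\end{equation*}

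From here the estimate is routine. By the triangle inequality,
\begin{equation*}
\|\sigma_h - \bar\sigma_h\|_{L^2(T)} \leq \|f - \bar f\|_{L^2(T)} + \|r_T - \bar r_T\|_{L^2(T)}.
\end{equation*}
For the first term I would use that the average $\bar f$ is the $L^2(T)$-orthogonal projection of $f$ onto $P_0(T)$, so $\|f - \bar f\|_{L^2(T)} = \min_{c\in P_0(T)}\|f - c\|_{L^2(T)}$, which is precisely the local contribution to $\eta_4$. For the second term I would use that projection onto constants is a contraction, giving $\|r_T - \bar r_T\|_{L^2(T)} \leq \|r_T\|_{L^2(T)} = \|\Delta u_h + f - \sigma_h\|_{L^2(T)}$, the local contribution to $\eta_1$.

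Finally I would multiply by $h_T$, square, sum over $T\in\cT_h$, and apply the elementary inequality $(a+b)^2 \leq 2a^2 + 2b^2$ to obtain $\eta_3^2 \leq 2\eta_1^2 + 2\eta_4^2$, hence $\eta_3 \leq C(\eta_1 + \eta_4)$ with $C = \sqrt{2}$.

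I do not anticipate a genuine obstacle here; the entire content of the lemma is the observation that $\Delta u_h$ is piecewise constant and therefore invisible to the averaging operator, so that the oscillation of $\sigma_h$ is controlled by the oscillation of $f$ together with the volume residual already accounted for in $\eta_1$. The only point requiring a little care is bookkeeping the two projection facts (the average minimizes the $L^2$ distance to constants, and the projection is norm-nonincreasing), both of which are standard and already cited in the excerpt via \cite{BScott:2008:FEM,Ciarlet:1978:FEM}.
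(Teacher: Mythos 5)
Your proof is correct and rests on exactly the same observations as the paper's: $\overline{\Delta u_h}=\Delta u_h$ because $u_h|_T\in\bbP_2(T)$, the average is the $L^2(T)$-best constant approximation of $f$, and averaging is an $L^2$-contraction. Your exact identity $\sigma_h-\bar\sigma_h=(f-\bar f)-(r_T-\bar r_T)$ is a marginally cleaner packaging of the paper's add-and-subtract argument, but the route is essentially identical.
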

\begin{proof}
Since $u_h$ is piecewise quadratic, we find using triangle
inequality and the stability of the $L^2$-projection that
\begin{align*}
\|\bar\sigma_h-\sigma_h\|_{L^2(T)} &\leq \|f+\Delta
u_h-\sigma_h\|_{L^2(T)}+\|\bar\sigma_h-\bar f-\Delta
u_h\|_{L^2(T)}+\|f-\bar f\|_{L^2(T)}\\
&=\|f+\Delta u_h-\sigma_h\|_{L^2(T)}+ \|\bar\sigma_h-\bar
f-\overline{\Delta u_h}\,\|_{L^2(T)}+\|f-\bar f\|_{L^2(T)}\\
&=\|f+\Delta u_h-\sigma_h\|_{L^2(T)}+ \|\overline{\sigma_h-
f-\Delta u_h}\,\|_{L^2(T)}+\|f-\bar f\|_{L^2(T)}\\
&\leq 2 \|f+\Delta u_h-\sigma_h\|_{L^2(T)}+ \inf_{c\in
P_0(T)}\|f-c\|_{L^2(T)}.
\end{align*}
This completes the proof.
\end{proof}

\par
\noindent Define the residual $G_h: H^1_0(\Omega)\rightarrow \R$
by
\begin{align}\label{eq:Gh}
G_h(v)=a(u-u_h,v)+\langle \sigma-\sigma_h, v\rangle \quad \forall
v\in H^1_0(\Omega).
\end{align}
The residual $G_h$ helps to derive the error estimates as in the
case of linear elliptic problems. It is easy to prove the
following lemma which connects the norm of the error to the norm
of the residual $G_h$.

\begin{lemma}\label{lem:ErrorRelation}
There hold
\begin{align*}
\|\nabla(u-u_h)\|^2 &\leq \|G_h\|_{-1}^2-2\langle \sigma-\sigma_h,
u-u_h\rangle,\\
 \|\sigma-\sigma_h\|_{-1}&\leq \|G_h\|_{-1}+\|\nabla(u-u_h)\|,\\
\|\nabla(u-u_h)\|^2+\|\sigma-\sigma_h\|_{-1}^2&\leq
4\|G_h\|_{-1}^2-4\langle \sigma-\sigma_h, u-u_h\rangle.
\end{align*}
\end{lemma}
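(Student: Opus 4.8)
Looking at the statement of Lemma~\ref{lem:ErrorRelation}, the plan is to establish three inequalities that relate the energy norm of the displacement error and the $H^{-1}$ norm of the Lagrange multiplier error to the norm of the residual $G_h$. The starting point is the defining identity \eqref{eq:Gh}, namely $G_h(v)=a(u-u_h,v)+\langle \sigma-\sigma_h,v\rangle$ for all $v\in H^1_0(\Omega)$. The natural strategy is to test this identity with carefully chosen functions $v$ and then invoke the coercivity of $a(\cdot,\cdot)$ (here $a(v,v)=\|\nabla v\|^2$ exactly, since $a$ is the Dirichlet form) together with the duality definition of $\|\cdot\|_{-1}$.

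For the first inequality, I would set $v=u-u_h\in H^1_0(\Omega)$ in \eqref{eq:Gh} to obtain $\|\nabla(u-u_h)\|^2=G_h(u-u_h)-\langle\sigma-\sigma_h,u-u_h\rangle$. Bounding the first term by $\|G_h\|_{-1}\|\nabla(u-u_h)\|$ via the definition of the dual norm and then applying Young's inequality $\|G_h\|_{-1}\|\nabla(u-u_h)\|\le \tfrac12\|G_h\|_{-1}^2+\tfrac12\|\nabla(u-u_h)\|^2$ would yield, after absorbing the $\tfrac12\|\nabla(u-u_h)\|^2$ term to the left, exactly the claimed bound $\|\nabla(u-u_h)\|^2\le\|G_h\|_{-1}^2-2\langle\sigma-\sigma_h,u-u_h\rangle$. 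For the second inequality, I would rearrange \eqref{eq:Gh} as $\langle\sigma-\sigma_h,v\rangle=G_h(v)-a(u-u_h,v)$; taking the supremum over $v\in H^1_0(\Omega)$ with $\|\nabla v\|=1$, using $|a(u-u_h,v)|\le\|\nabla(u-u_h)\|\,\|\nabla v\|$ and the triangle inequality in the supremum, gives $\|\sigma-\sigma_h\|_{-1}\le\|G_h\|_{-1}+\|\nabla(u-u_h)\|$.

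The third inequality then follows by combining the first two. From the second bound, squaring and using $(a+b)^2\le 2a^2+2b^2$ gives $\|\sigma-\sigma_h\|_{-1}^2\le 2\|G_h\|_{-1}^2+2\|\nabla(u-u_h)\|^2$; I would then substitute the first inequality for $\|\nabla(u-u_h)\|^2$ and add the first inequality once more to account for the $\|\nabla(u-u_h)\|^2$ term on the left. Keeping careful track of the constants, the $\|G_h\|_{-1}^2$ contributions accumulate to the factor $4$ and the residual pairings $\langle\sigma-\sigma_h,u-u_h\rangle$ accumulate to the factor $-4$, producing $\|\nabla(u-u_h)\|^2+\|\sigma-\sigma_h\|_{-1}^2\le 4\|G_h\|_{-1}^2-4\langle\sigma-\sigma_h,u-u_h\rangle$.

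I do not expect a genuine obstacle here, since the lemma is essentially an algebraic consequence of the residual identity together with Cauchy--Schwarz, the dual-norm definition, and Young's inequality; the authors themselves flag it as ``easy to prove.'' The only point requiring a little care is bookkeeping of the constants in the third estimate so that exactly the factors $4$ and $-4$ emerge, and ensuring that the sign-indefinite pairing term $\langle\sigma-\sigma_h,u-u_h\rangle$ is carried along unestimated rather than bounded (its eventual sign, controlled through the variational inequalities \eqref{eq:sigma} and Lemma~\ref{lem:propssigma}, is what makes these bounds useful downstream, but that is not needed for the proof of this lemma itself).
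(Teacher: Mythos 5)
Your derivations of the first two inequalities are correct and are surely what the authors have in mind (the paper states the lemma without proof, calling it ``easy''): testing \eqref{eq:Gh} with $v=u-u_h$ plus Young's inequality gives the first, and the triangle inequality in the dual-norm supremum gives the second. The problem is in the third inequality, and it is not just bookkeeping. Following your own recipe, $\|\sigma-\sigma_h\|_{-1}^2\le 2\|G_h\|_{-1}^2+2\|\nabla(u-u_h)\|^2\le 4\|G_h\|_{-1}^2-4\langle\sigma-\sigma_h,u-u_h\rangle$, and then adding the first inequality once more contributes a further $\|G_h\|_{-1}^2-2\langle\sigma-\sigma_h,u-u_h\rangle$, so the constants accumulate to $5$ and $-6$, not $4$ and $-4$. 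Worse, no rearrangement can repair this: writing $e=\|\nabla(u-u_h)\|$, $s=\|\sigma-\sigma_h\|_{-1}$, $g=\|G_h\|_{-1}$, $p=\langle\sigma-\sigma_h,u-u_h\rangle$, the scalar data $e=g=1$, $s=2$, $p=0$ satisfy both $e^2\le g^2-2p$ and $s\le g+e$ (and even the sharper identity $e^2+p=G_h(u-u_h)\le ge$), yet violate $e^2+s^2\le 4g^2-4p$ since $5>4$. So the third estimate is genuinely not a consequence of the first two; it needs the Hilbert-space structure that your intermediate inequalities have already discarded.

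The missing idea is to represent $\sigma-\sigma_h$ by Riesz: let $R\in H^1_0(\Omega)$ satisfy $a(R,v)=\langle\sigma-\sigma_h,v\rangle$ for all $v\in H^1_0(\Omega)$, so that $\|\sigma-\sigma_h\|_{-1}=\|\nabla R\|$ and $G_h(v)=a(u-u_h+R,v)$. Then one has the exact identity
\begin{equation*}
\|G_h\|_{-1}^2=\|\nabla(u-u_h+R)\|^2=\|\nabla(u-u_h)\|^2+2\langle\sigma-\sigma_h,u-u_h\rangle+\|\sigma-\sigma_h\|_{-1}^2,
\end{equation*}
i.e. $e^2+s^2=g^2-2p$, from which all three stated bounds follow at once: the first by dropping $s^2\ge 0$, the second by the triangle inequality $\|\nabla R\|\le\|\nabla(u-u_h+R)\|+\|\nabla(u-u_h)\|$, and the third because $2p\le 2es\le e^2+s^2=g^2-2p$ forces $2p\le\tfrac12 g^2\le 3g^2$, hence $e^2+s^2=g^2-2p\le 4g^2-4p$. (If you prefer not to fix the constants $4$, your argument does prove $e^2+s^2\le 5g^2-6p$, which would still suffice for Theorem~\ref{thm:Apost-Est} since the $-6p$ term is absorbed the same way via Lemma~\ref{lem:sigmaLower}; but as a proof of the lemma as stated it falls short.)
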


In the following lemma, the norm of the residual $G_h$ has been
estimated using error estimators:

\begin{lemma}\label{lem:GhBound}
It holds that
\begin{align*}
\|G_h\|_{-1}\leq C \left(\eta_1^2+\eta_2^2+\eta_3^2\right)^{1/2}.
\end{align*}
\end{lemma}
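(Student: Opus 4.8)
The plan is to bound the residual $G_h$ acting on an arbitrary $v\in H^1_0(\Omega)$ with $\|\nabla v\|=1$, and then take the supremum to control $\|G_h\|_{-1}$. The natural device is to subtract off a finite element interpolant (or quasi-interpolant) $v_h\in V_h$ of $v$, exploiting the Galerkin-type orthogonality encoded in \eqref{eq:DPProperty} together with the definition \eqref{eq:sigmahdef} of $\sigma_h$. Concretely, I would start from the definition \eqref{eq:Gh}, use \eqref{eq:sigmadef} to rewrite $a(u,v)=(f,v)-\langle\sigma,v\rangle$, so that the $\sigma$ terms cancel and I am left with $G_h(v)=(f,v)-a(u_h,v)-\langle\sigma_h,v\rangle$. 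The goal is then to show this equals a sum over elements and edges of the standard residual quantities appearing in $\eta_1,\eta_2,\eta_3$, paired against $v-v_h$.

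First I would insert $v_h$ and split $G_h(v)=\big[(f,v)-a(u_h,v)\big]-\langle\sigma_h,v\rangle$, integrating by parts element-by-element on $a(u_h,v)$ to produce the volume residual $\Delta u_h+f$ tested against $v$ and the edge terms $\sjump{\nabla u_h}$ tested against $v$ on interior edges. The element residual combines with $\sigma_h$ to give the quantity $\Delta u_h+f-\sigma_h$ that defines $\eta_1$, after I account carefully for how $\langle\sigma_h,v\rangle$ relates to $\int_\Omega \sigma_h v\,dx$: here the discrete inner product $\langle\cdot,\cdot\rangle_h$ versus the true $L^2$ pairing is reconciled using the quadrature identity \eqref{eq:QuadExact}, which is exact on $P_2$, and the piecewise-constant approximation $\bar\sigma_h$, which is precisely why $\eta_3$ (measuring $\sigma_h-\bar\sigma_h$) enters. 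For the orthogonality step I would subtract the discrete relations: using \eqref{eq:DPProperty} gives $a(u_h,v_h)=(f,v_h)$ against the vertex-based part, and \eqref{eq:sigmahdef} supplies the midpoint-based part, so that testing against $v_h$ rather than $v$ kills the consistent portion and leaves everything paired with $v-v_h$.

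I would then apply the approximation (Clément/Scott–Zhang quasi-interpolation) estimates $\|v-v_h\|_{L^2(T)}\le C h_T\|\nabla v\|_{L^2(\omega_T)}$ and the corresponding edge trace bound $\|v-v_h\|_{L^2(e)}\le C h_e^{1/2}\|\nabla v\|_{L^2(\omega_e)}$, together with Cauchy–Schwarz on each element and edge. Each residual term is multiplied by the appropriate power of $h_T$ or $h_e$, reproducing exactly the weights in the definitions of $\eta_1$, $\eta_2$, and $\eta_3$. A final discrete Cauchy–Schwarz across all elements and edges, combined with the finite-overlap property of the patches $\omega_T$, $\omega_e$ (valid by shape-regularity), collapses the $\|\nabla v\|_{L^2(\omega)}$ factors into $\|\nabla v\|=1$ and yields the claimed bound $\|G_h\|_{-1}\le C(\eta_1^2+\eta_2^2+\eta_3^2)^{1/2}$.

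The main obstacle I anticipate is the careful bookkeeping of the Lagrange-multiplier term: the definition \eqref{eq:sigmahdef} uses the lumped discrete pairing $\langle\cdot,\cdot\rangle_h$ and the operator $\Pi_h$, whereas $G_h$ in \eqref{eq:Gh} uses the genuine duality pairing $\langle\sigma_h,v\rangle$. Translating between these requires using \eqref{eq:QuadExact} to identify $\langle\sigma_h,\Pi_h v_h\rangle_h$ with the exact integral $\int_\Omega\sigma_h(\Pi_h v_h)$, and then controlling the mismatch $\int_\Omega\sigma_h(v-\Pi_h^{-1}\text{-image})$ by replacing $\sigma_h$ with its mean $\bar\sigma_h$ on each element; the commitment error there is precisely $\eta_3$, and Lemma~\ref{lem:SigmaQuad} (bounding $\|v_h-\Pi_h^{-1}v_h\|$ by $h_T\|\nabla v_h\|$) is the tool that keeps this term of optimal order. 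Getting the quadrature reconciliation and the $\Pi_h$-versus-identity discrepancy to land cleanly on $\eta_3$, without spurious higher-order leftovers, is the delicate part of the argument.
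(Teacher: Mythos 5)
Your proposal is correct and follows essentially the same route as the paper: split $G_h(v)=G_h(v-v_h)+G_h(v_h)$ with $v_h$ a Cl\'ement interpolant, integrate by parts on the first piece to obtain the $\eta_1$ and $\eta_2$ terms, and reduce the second piece via \eqref{eq:DPProperty}, \eqref{eq:sigmahdef}, the quadrature identity \eqref{eq:QuadExact} and Lemma~\ref{lem:SigmaQuad} to $(\sigma_h-\bar\sigma_h,\Pi_h^{-1}v_h-v_h)$, which is controlled by $\eta_3$. You have correctly identified the delicate reconciliation between $\langle\cdot,\cdot\rangle_h$ and the $L^2$ pairing as the crux, and your handling of it matches the paper's.
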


\begin{proof}
Let $v\in H^1_0(\Omega)$ and $v_h\in V_h$ be the Clement
interpolation of $v$. Then
\begin{align}
\langle G_h,v\rangle&=\langle G_h,v-v_h\rangle+\langle
G_h,v_h\rangle.
\end{align}
Note that any $v_h\in V_h$ can be written as
$$v_h=v_1+v_2, \quad v_1\in W_h \text{ and } v_2 \in W_h^c.$$
Firstly using \eqref{eq:Gh}, \eqref{eq:sigmadef},
\eqref{eq:sigmahdef} and \eqref{eq:DPProperty}, we find
\begin{align*}
\langle G_h, v_h\rangle&=a(u-u_h, v_h)+\langle \sigma-\sigma_h, v_h \rangle\\
&=-a(u_h, v_h)-(\sigma_h, v_h)+a(u, v_h)+\langle \sigma, v_h \rangle\\
&=(f, v_h)-a(u_h, v_h)-(\sigma_h, v_h)\\
&=(f, v_1)-a(u_h, v_1)-(\sigma_h, v_h)\\
&=\langle \sigma_h, \Pi^{-1}_hv_1 \rangle_h-(\sigma_h, v_h)
=(\sigma_h, \Pi^{-1}_hv_1-v_h)\\
&=(\sigma_h, \Pi^{-1}_hv_h-v_h)=(\sigma_h-\bar\sigma_h,
\Pi^{-1}_hv_h-v_h),
\end{align*}
since $(\bar\sigma_h, v_h-\Pi^{-1}_hv_h)=0$. Using Lemma
\ref{lem:SigmaQuad}
\begin{align*}
\langle G_h,v_h\rangle \leq C\eta_3 \|\nabla v_h\|\leq C\eta_3
\|\nabla v\|.
\end{align*}
Secondly using \eqref{eq:Gh},  we find
\begin{align*}
\langle G_h,v-v_h\rangle&=a(u-u_h,v-v_h)+\langle \sigma-\sigma_h,v-v_h \rangle\\
&=(f,v-v_h)-a(u_h,v-v_h)-(\sigma_h,v-v_h)\\
&=\sum_{T\in\cT_h} \int_{T}(f+\Delta u_h-\sigma_h)(v-v_h)\,dx-\sum_{T\in\cT_h} \int_{\partial T}\frac{\partial u_h|_T}{\partial n_T}(v-v_h)\,ds\\
&=\sum_{T\in\cT_h} \int_{T}(f+\Delta u_h-\sigma_h)(v-v_h)\,dx-\sum_{e\in\cE_h^i}\int_e\sjump{\nabla u_h}(v-v_h)\,ds\\
&\leq C \left(\eta_1^2+\eta_2^2\right)^{1/2}\|\nabla v\|.
\end{align*}
This completes the proof.
\end{proof}

It remains to find a lower bound for $\langle \sigma-\sigma_h,
u-u_h\rangle$. To this end,  let  $v^+=\max\{v,0\}$ and
$v^-=\max\{-v,0\}$ for any $v\in H^1(\Omega)$. Then $v=v^+ - v^-$.
For the rest of the article, the $P_2$-Lagrange nodal
interpolation of $\chi$ in $V_h$ is denoted by $\chi_h$.
\begin{lemma}\label{lem:sigmaLower}
Let $\chi_h\in V_h$ be the Lagrange nodal interpolation of $\chi$.
Then, it holds that
\begin{align*}
\langle \sigma-\sigma_h, u-u_h\rangle &\geq -\frac{1}{8}
\left(\|\sigma-\sigma_h\|_{-1}^2+\|\nabla(u-u_h)\|^2\right)-C\left(\|\nabla(\chi-u_h)^+\|^2+\eta_3^2\right)\\
&\quad+\sum_{T\in\bF_h}\int_T \bar
\sigma_h(u_h-\chi_h)\,dx+\sum_{T\in\bC_h\cup\,\bF_h}\int_T\bar
\sigma_h(\chi_h-\min\{u_h,\chi\})\,dx.
\end{align*}
\end{lemma}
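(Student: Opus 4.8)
The plan is to bound $\langle \sigma-\sigma_h, u-u_h\rangle$ from below by splitting the duality pairing into its continuous and discrete parts and exploiting the sign properties of $\sigma$ and $\sigma_h$ established in the earlier lemmas. First I would write
\begin{align*}
\langle \sigma-\sigma_h, u-u_h\rangle = \langle \sigma, u-u_h\rangle - \langle \sigma_h, u-u_h\rangle.
\end{align*}
For the continuous term, the key fact is the complementarity relation \eqref{eq:sigma}, which gives $\langle \sigma, v-u\rangle \le 0$ for all $v\in\cK$. To use it productively I would decompose $u-u_h$ against the obstacle, writing $u_h = \chi - (\chi-u_h)^+ + (\chi-u_h)^-$ and inserting a feasible competitor such as $u_h + (\chi-u_h)^+ \in \cK$ (this lies above $\chi$), so that $\langle \sigma, u-u_h\rangle \ge -\langle \sigma,(\chi-u_h)^+\rangle$, and then control $\langle \sigma,(\chi-u_h)^+\rangle$ via \eqref{eq:sigmadef} and a Young/Cauchy–Schwarz inequality, producing the $\|\nabla(\chi-u_h)^+\|^2$ term together with pieces absorbed into $\frac{1}{8}(\|\sigma-\sigma_h\|_{-1}^2+\|\nabla(u-u_h)\|^2)$.

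Next I would treat the discrete term $-\langle\sigma_h,u-u_h\rangle$. Since $\sigma_h\in V_{nc}$ acts through the lumped pairing, I would replace $\sigma_h$ by its piecewise-constant mean $\bar\sigma_h$ at the cost of the oscillation-type remainder governed by $\eta_3$ (again via Cauchy–Schwarz and Young, feeding into the $\eta_3^2$ term and the absorbed negative multiples). The heart of the argument is then the elementwise analysis of $\int_T \bar\sigma_h(u-u_h)\,dx$, using Lemma \ref{lem:sigmaprops}: on $\bN_h$ we have $\bar\sigma_h=0$, so those elements vanish; elsewhere $\bar\sigma_h\le 0$. On each remaining element I would introduce $\chi_h$ and the clamped function $\min\{u_h,\chi\}$ to rewrite $u-u_h$ so that the sign of $\bar\sigma_h$ combines with $u\ge\chi$ favourably, which is precisely what produces the two explicit boundary/contact sums $\sum_{\bF_h}\int_T\bar\sigma_h(u_h-\chi_h)\,dx$ and $\sum_{\bC_h\cup\bF_h}\int_T\bar\sigma_h(\chi_h-\min\{u_h,\chi\})\,dx$.

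The main obstacle I anticipate is the elementwise bookkeeping over the sets $\bC_h$, $\bN_h$, $\bF_h$: one must carefully decompose $u-u_h = (u-\chi) + (\chi-\chi_h) + (\chi_h - \min\{u_h,\chi\}) + (\min\{u_h,\chi\}-u_h)$ and verify, element by element, which terms have a definite sign against $\bar\sigma_h\le 0$ (for instance $u-\chi\ge 0$ a.e., so $\bar\sigma_h(u-\chi)\le 0$ is discarded as it only helps the lower bound) and which must be retained explicitly because their sign is indefinite. Keeping track of which contributions are nonnegative (and hence dropped from the lower bound), which are absorbed into the $\frac{1}{8}$ and $C\eta_3^2$, $C\|\nabla(\chi-u_h)^+\|^2$ terms, and which survive as the two explicit sums, is the delicate part; the analytic inequalities themselves are routine applications of Cauchy–Schwarz, Young's inequality, and the approximation estimate for $\bar v$ quoted just before the lemma.
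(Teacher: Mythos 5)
Your overall strategy coincides with the paper's: the competitor $u_h+(\chi-u_h)^+=\max\{u_h,\chi\}\in\cK$ inserted into \eqref{eq:sigma}, the split $\sigma_h=\bar\sigma_h+(\sigma_h-\bar\sigma_h)$ with the oscillation part absorbed into $\eta_3^2$ via Young's inequality, and the sign information from Lemma \ref{lem:sigmaprops}. Two concrete points would, however, derail the execution as written. First, the complementarity step has the wrong sign: with $v=u_h+(\chi-u_h)^+\in\cK$, \eqref{eq:sigma} gives $\langle\sigma,u-u_h\rangle\geq +\langle\sigma,(\chi-u_h)^+\rangle$, not $\geq-\langle\sigma,(\chi-u_h)^+\rangle$. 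This is not cosmetic, because the $\sigma_h$-part of that term is not estimated in absolute value; it must be combined exactly with $-\langle\sigma_h,u-u_h\rangle$ to form $\langle\sigma_h,(\chi-u_h)^+-(u-u_h)\rangle=\langle\sigma_h,(\chi-u_h)^--(u-\chi)\rangle$, after which $-\langle\bar\sigma_h,u-\chi\rangle\geq0$ is discarded and only $\langle\bar\sigma_h,(\chi-u_h)^-\rangle$ survives.

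Second, and more seriously, the elementwise decomposition you propose, $u-u_h=(u-\chi)+(\chi-\chi_h)+(\chi_h-\min\{u_h,\chi\})+(\min\{u_h,\chi\}-u_h)$, is not the one that produces the stated terms. Applied to $-\langle\bar\sigma_h,u-u_h\rangle$ it leaves the piece $-\int_T\bar\sigma_h(\chi-\chi_h)\,dx$, which has no definite sign, is not absorbable into any of the allowed quantities, and does not appear in the statement; moreover it yields $\chi_h-\min\{u_h,\chi\}$ with the opposite sign to the one claimed. The decomposition that works is performed only after reducing to $\langle\bar\sigma_h,(\chi-u_h)^-\rangle$, which by Lemma \ref{lem:sigmaprops} is supported on $\bC_h\cup\bF_h$: one writes $(\chi-u_h)^-=u_h-\min\{u_h,\chi\}=(u_h-\chi_h)+(\chi_h-\min\{u_h,\chi\})$, so that $\chi$ never appears linearly and the first summand is a piecewise quadratic. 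The final missing ingredient is the reason the first sum runs only over $\bF_h$: for $T\in\bC_h$ one has $u_h(z)=\chi(z)=\chi_h(z)$ at every $z\in\cM_T$, and since $u_h-\chi_h\in P_2(T)$ the exactness of the midpoint quadrature \eqref{eq:QuadExact} gives $\int_T\bar\sigma_h(u_h-\chi_h)\,dx=0$. Without this observation you would only obtain the weaker statement with the first sum over $\bC_h\cup\bF_h$. You correctly identified the bookkeeping as the delicate point, but the specific bookkeeping you wrote down would not close.
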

\begin{proof}
Let $u_h^*=\max\{u_h,\chi\}$. Then $u_h^*\in\cK$ and
$u_h^*-u_h=(\chi-u_h)^+$. Using \eqref{eq:sigma} and $ab\leq
2a^2+b^2/8$, we find
\begin{align*}
\langle \sigma, u-u_h\rangle &= \langle \sigma,
u-u_h^*\rangle+\langle \sigma, u_h^*-u_h\rangle \geq \langle
\sigma, u_h^*-u_h\rangle\\
& = \langle \sigma-\sigma_h, u_h^*-u_h\rangle + \langle \sigma_h,
u_h^*-u_h\rangle\\
&\geq -\frac{1}{8}
\|\sigma-\sigma_h\|_{-1}^2-2\|\nabla(u_h^*-u_h)\|^2 + \langle
\sigma_h, u_h^*-u_h\rangle.
\end{align*}
\par
\noindent Therefore
\begin{align*}
\langle \sigma-\sigma_h, u-u_h\rangle &\geq -\frac{1}{8}
\|\sigma-\sigma_h\|_{-1}^2-2\|\nabla(u_h^*-u_h)\|^2
+ \langle \sigma_h, (u_h^*-u_h)-(u-u_h)\rangle\\
&\geq -\frac{1}{8}
\|\sigma-\sigma_h\|_{-1}^2-2\|\nabla(u_h^*-u_h)\|^2
+ \langle \bar  \sigma_h, (u_h^*-u_h)-(u-u_h)\rangle\\
&\quad+\langle \sigma_h-\bar  \sigma_h,
(u_h^*-u_h)-(u-u_h)\rangle.
\end{align*}
Notice that
\begin{align*}
|\langle \sigma_h-\bar  \sigma_h, (u_h^*-u_h)-(u-u_h)\rangle |&=
|\langle \sigma_h-\bar  \sigma_h,
(u_h^*-u_h)-\overline{(u_h^*-u_h)}+(u-u_h)-\overline{(u-u_h)}\rangle|
\\& \leq C \eta_3
\left(\|\nabla(u_h^*-u_h)\|+\|\nabla(u-u_h)\|\right)\\
&\leq C
\left(\eta_3^2+\|\nabla(u_h^*-u_h)\|^2\right)+\frac{1}{8}\|\nabla(u-u_h)\|^2.
\end{align*}
Using the fact $\chi-u\leq 0$ a.e. in $\Omega$ and $\bar
\sigma_h\leq 0$ on $\bar\Omega$, we find
\begin{align*}
\langle \bar  \sigma_h, (u_h^*-u_h)-(u-u_h)\rangle \geq \langle
\bar  \sigma_h, (u_h^*-u_h)-(\chi-u_h)\rangle.
\end{align*}
Note that $(u_h^*-u_h)-(\chi-u_h)=(\chi-u_h)^-$ and
\begin{align*}
\langle \bar  \sigma_h, (u_h^*-u_h)-(u-u_h)\rangle \geq \langle
\bar  \sigma_h, (\chi-u_h)^-\rangle=\int_\Omega \bar
\sigma_h(\chi-u_h)^-\,dx.
\end{align*}
Now using Lemma \ref{lem:sigmaprops},
\begin{align*}
\int_\Omega \bar \sigma_h(\chi-u_h)^-\,dx=
\sum_{T\in\bC_h\cup\,\bF_h}\int_T\bar \sigma_h(\chi-u_h)^-\,dx.
\end{align*}
For any $T\in \bC_h\cup\,\bF_h$,
\begin{align*}
\int_T (\chi-u_h)^-\,dx &=\int_T \max\{u_h-\chi,0\}\,dx=\int_T
(u_h-\min\{u_h,\chi\})\,dx\\
&= \int_T (u_h-\chi_h)\,dx+\int_T (\chi_h-\min\{u_h,\chi\})\,dx.
\end{align*}
Hence
\begin{align}
\int_\Omega \bar \sigma_h(\chi-u_h)^-\,dx &=
\sum_{T\in\bC_h\cup\,\bF_h}\int_T \bar \sigma_h(u_h-\chi_h)\,dx
\label{eq:sigmalower}\\&\quad+
\sum_{T\in\bC_h\cup\,\bF_h}\int_T\bar
\sigma_h(\chi_h-\min\{u_h,\chi\})\,dx.\notag
\end{align}
Since
\begin{align*}
\sum_{T\in\bC_h}\int_T \bar \sigma_h (u_h-\chi_h)\,dx =0,
\end{align*}
we note that
\begin{align*}
\sum_{T\in\bC_h\cup\,\bF_h} \int_T \bar \sigma_h (u_h-\chi_h)\,dx
& = \sum_{T\in\bF_h} \int_T \bar \sigma_h (u_h-\chi_h)\,dx.
\end{align*}
Substitute this in \eqref{eq:sigmalower} and find
\begin{align*}
\int_\Omega \bar \sigma_h(\chi-u_h)^-\,dx &=
\sum_{T\in\bF_h}\int_T \bar \sigma_h(u_h-\chi_h)\,dx\\&\quad+
\sum_{T\in\bC_h\cup\,\bF_h}\int_T\bar
\sigma_h(\chi_h-\min\{u_h,\chi\})\,dx.
\end{align*}
This completes the proof.
\end{proof}

From Lemma \ref{lem:ErrorRelation}, \ref{lem:GhBound} and
\ref{lem:sigmaLower}, we deduce the following result on {\em a
posteriori} error control of quadratic fem:

\begin{theorem}\label{thm:Apost-Est} It holds that
\begin{align*}
\|\nabla(u-u_h)\|^2+\|\sigma-\sigma_h\|_{-1}^2&\leq
C\Big(\eta_1^2+\eta_2^2+\eta_3^2+\|\nabla(\chi-u_h)^+\|^2
-\sum_{T\in\bF_h}\int_T \bar \sigma_h(u_h-\chi_h)\,dx
\Big.\\&\qquad \Big.-\sum_{T\in\bC_h\cup\,\bF_h}\int_T\bar
\sigma_h(\chi_h-\min\{u_h,\chi\})\,dx\Big).
\end{align*}
\end{theorem}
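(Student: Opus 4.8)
The plan is to assemble the three preceding lemmas and close with an absorption (kick-back) argument. I would begin with the third estimate in Lemma~\ref{lem:ErrorRelation},
\[
\|\nabla(u-u_h)\|^2+\|\sigma-\sigma_h\|_{-1}^2\leq 4\|G_h\|_{-1}^2-4\langle \sigma-\sigma_h, u-u_h\rangle,
\]
since its left-hand side is exactly the quantity the theorem controls. The two terms on the right are precisely what Lemmas~\ref{lem:GhBound} and~\ref{lem:sigmaLower} address, so the remaining work is essentially bookkeeping.

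Next I would bound the residual term by Lemma~\ref{lem:GhBound}, giving $4\|G_h\|_{-1}^2\leq C(\eta_1^2+\eta_2^2+\eta_3^2)$. For the pairing, I would multiply the lower bound of Lemma~\ref{lem:sigmaLower} by $-4$ (which reverses the inequality) to obtain an upper bound for $-4\langle \sigma-\sigma_h, u-u_h\rangle$; this produces the terms $\tfrac12\|\sigma-\sigma_h\|_{-1}^2+\tfrac12\|\nabla(u-u_h)\|^2$, a multiple of $\|\nabla(\chi-u_h)^+\|^2+\eta_3^2$, and the two contact/free-boundary integrals appearing (with the correct signs) in the statement.

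The decisive step is the absorption: the terms $\tfrac12\|\sigma-\sigma_h\|_{-1}^2$ and $\tfrac12\|\nabla(u-u_h)\|^2$ now sit on both sides, so I would move them to the left, leaving $\tfrac12\big(\|\nabla(u-u_h)\|^2+\|\sigma-\sigma_h\|_{-1}^2\big)$ bounded by the estimator terms, the two integrals, and $\|\nabla(\chi-u_h)^+\|^2$. Multiplying through by $2$ and merging all constants into a single $C$ yields the claim. The only point requiring care---and the reason the statement is provable at all---is that this absorption closes: it works precisely because the Young-type constant $\tfrac18$ was chosen in Lemma~\ref{lem:sigmaLower} so that $4\cdot\tfrac18=\tfrac12<1$, leaving a strictly positive coefficient on the left. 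Correctly tracking the signs of the two integral terms through the multiplication by $-4$ is the other place where a slip would be easy.
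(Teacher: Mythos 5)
Your proposal is correct and follows exactly the route the paper intends: the paper gives no written proof for this theorem, stating only that it follows from Lemmas~\ref{lem:ErrorRelation}, \ref{lem:GhBound} and \ref{lem:sigmaLower}, and your combination of the third estimate of Lemma~\ref{lem:ErrorRelation} with the other two lemmas, followed by absorption of the $\tfrac12$-terms, is the standard way to carry this out. The observation that the factor $\tfrac18$ in Lemma~\ref{lem:sigmaLower} is what makes the kick-back close is exactly the right point to flag.
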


\subsection{Simplified Error Estimator}
Motivated by the results in \cite{TG:2014:VIDG1}, we derive a
simplified error estimator  under an assumption on the trace of
the obstacle that $(\chi-\chi_h)|_{\partial\Omega}=0$, where
recall that $\chi_h\in V_h$ is the Lagrange nodal interpolation of
$\chi$. We assume this condition on the trace of $\chi$ for the
rest of this subsection. Define
\begin{align*}
\tilde K =\{v\in H^1_0(\Omega): v\geq \chi_h \text{ a.e. in }
\Omega\},
\end{align*}
and let $\tilde u \in \tilde K$ solves
\begin{align*}
a(\tilde u,\,v-\tilde u)\geq (f,\,v-\tilde u)\quad \forall \, v\in
\tilde K.
\end{align*}
Since $\chi_h^+\in\tilde K$, there exists a unique solution to the
above auxiliary problem. The result in \cite{TG:2014:VIDG1}
implies
\begin{align}\label{eq:aux}
\|\nabla(u-\tilde u)\|\leq C \|\nabla(\chi-\chi_h)\|.
\end{align}
Using the same arguments in proving Theorem \ref{thm:Apost-Est}
and replacing $\chi$ with $\chi_h$, we deduce the following
result:
\begin{lemma}\label{lem:Apost-Est1} There hold
\begin{align*}
\|\nabla(\tilde u-u_h)\|^2&\leq
C\Big(\eta_1^2+\eta_2^2+\eta_3^2+\|\nabla(\chi_h-u_h)^+\|^2
-\sum_{T\in\bF_h}\int_T \bar \sigma_h(\chi_h-u_h)^-\,dx
\Big.\\&\qquad \Big.-\sum_{T\in\bC_h}\int_T\bar
\sigma_h(\chi_h-u_h)^+\,dx\Big).
\end{align*}
\end{lemma}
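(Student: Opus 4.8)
The plan is to mirror the three-step structure behind Theorem~\ref{thm:Apost-Est}, carried out for the auxiliary pair $(\tilde u,\tilde\sigma)$ instead of $(u,\sigma)$ and with obstacle $\chi_h$ instead of $\chi$. First I would introduce the auxiliary multiplier $\tilde\sigma\in H^{-1}(\Omega)$ by $\langle\tilde\sigma,v\rangle=(f,v)-a(\tilde u,v)$ for all $v\in H^1_0(\Omega)$; the auxiliary variational inequality then yields $\langle\tilde\sigma,v-\tilde u\rangle\le0$ for all $v\in\tilde K$, the exact analogue of \eqref{eq:sigma}. With the residual $\tilde G_h(v)=a(\tilde u-u_h,v)+\langle\tilde\sigma-\sigma_h,v\rangle$, the key initial observation is that inserting the definition of $\tilde\sigma$ cancels the continuous data and gives $\tilde G_h(v)=(f,v)-a(u_h,v)-(\sigma_h,v)$, which is precisely the functional obtained for $G_h$ in the proof of Lemma~\ref{lem:GhBound}. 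Thus $\tilde G_h=G_h$ as elements of $H^{-1}(\Omega)$, so $\|\tilde G_h\|_{-1}\le C(\eta_1^2+\eta_2^2+\eta_3^2)^{1/2}$ follows directly from Lemma~\ref{lem:GhBound}, and the inequalities of Lemma~\ref{lem:ErrorRelation} hold verbatim with $(u,\sigma,G_h)$ replaced by $(\tilde u,\tilde\sigma,\tilde G_h)$, since their proofs use only the algebraic form of the residual.

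The substantive step is the analogue of Lemma~\ref{lem:sigmaLower}: a lower bound for $\langle\tilde\sigma-\sigma_h,\tilde u-u_h\rangle$. I would repeat that proof with $\tilde u_h^{*}=\max\{u_h,\chi_h\}$ in the role of $u_h^{*}$. Here the standing assumption $(\chi-\chi_h)|_{\partial\Omega}=0$ is exactly what I rely on: it forces $\chi_h|_{\partial\Omega}=\chi|_{\partial\Omega}\le0$, so that $\tilde u_h^{*}\in H^1_0(\Omega)$ and hence $\tilde u_h^{*}\in\tilde K$, which is what makes $\langle\tilde\sigma,\tilde u_h^{*}-\tilde u\rangle\le0$ available. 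The remaining ingredients transfer unchanged: $\tilde u_h^{*}-u_h=(\chi_h-u_h)^+$, the oscillation estimate for $\langle\sigma_h-\bar\sigma_h,\cdot\rangle$ in terms of $\eta_3$, the inequality $\chi_h-\tilde u\le0$ a.e.\ (from $\tilde u\in\tilde K$), and the sign properties $\bar\sigma_h\le0$ on $\Omega$ and $\bar\sigma_h=0$ on $\bN_h$ from Lemma~\ref{lem:sigmaprops}. Crucially, the sets $\bC_h,\bN_h,\bF_h$ and Lemmas~\ref{lem:propssigma} and~\ref{lem:sigmaprops} are insensitive to the replacement $\chi\to\chi_h$, because the $P_2$ nodal interpolant satisfies $\chi(z)=\chi_h(z)$ at every midpoint $z\in\cM_h$; this also preserves the quadrature identity $\int_T(u_h-\chi_h)\,dx=\tfrac{|T|}{3}\sum_{z\in\cM_T}(u_h-\chi_h)(z)=0$ for $T\in\bC_h$, which annihilates the $\bC_h$ part of $\int_T\bar\sigma_h(u_h-\chi_h)\,dx$.

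Combining these three ingredients through the third inequality of Lemma~\ref{lem:ErrorRelation} and absorbing the resulting $\tfrac12$-multiples of $\|\nabla(\tilde u-u_h)\|^2$ and $\|\tilde\sigma-\sigma_h\|_{-1}^2$ on the left yields, exactly as in Theorem~\ref{thm:Apost-Est}, the intermediate bound
\begin{align*}
\|\nabla(\tilde u-u_h)\|^2+\|\tilde\sigma-\sigma_h\|_{-1}^2 &\le C\Big(\eta_1^2+\eta_2^2+\eta_3^2+\|\nabla(\chi_h-u_h)^+\|^2-\sum_{T\in\bF_h}\int_T\bar\sigma_h(u_h-\chi_h)\,dx\Big.\\
&\qquad\Big.-\sum_{T\in\bC_h\cup\,\bF_h}\int_T\bar\sigma_h(\chi_h-\min\{u_h,\chi_h\})\,dx\Big).
\end{align*}

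Finally I would discard the nonnegative term $\|\tilde\sigma-\sigma_h\|_{-1}^2$ from the left and simplify the two contact/free-boundary sums. Because $\chi_h$ is its own $P_2$ interpolant, the last sum collapses via $\chi_h-\min\{u_h,\chi_h\}=(\chi_h-u_h)^+$, while the preceding one uses $u_h-\chi_h=(\chi_h-u_h)^- -(\chi_h-u_h)^+$. Splitting the $\bC_h\cup\bF_h$ sum into a $\bC_h$ part and an $\bF_h$ part, the two $(\chi_h-u_h)^+$ contributions over $\bF_h$ cancel, leaving exactly $-\sum_{T\in\bF_h}\int_T\bar\sigma_h(\chi_h-u_h)^-\,dx-\sum_{T\in\bC_h}\int_T\bar\sigma_h(\chi_h-u_h)^+\,dx$, which is the asserted bound. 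I expect the main difficulty to be bookkeeping rather than analysis: performing this sign/set recombination correctly, and verifying that every structural fact (the sign of $\bar\sigma_h$, the quadrature identity on $\bC_h$, and the membership $\tilde u_h^{*}\in\tilde K$) genuinely survives the substitution $\chi\to\chi_h$ under the boundary hypothesis.
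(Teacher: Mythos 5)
Your proposal is correct and follows exactly the route the paper intends: the paper proves Lemma~\ref{lem:Apost-Est1} simply by declaring that one repeats the arguments of Theorem~\ref{thm:Apost-Est} with $\chi$ replaced by $\chi_h$, which is precisely what you carry out. You in fact supply details the paper leaves implicit --- the identity $\tilde G_h=G_h$, the role of $(\chi-\chi_h)|_{\partial\Omega}=0$ in securing $\max\{u_h,\chi_h\}\in\tilde K$, the invariance of $\bC_h,\bN_h,\bF_h$ under $\chi\to\chi_h$ via midpoint interpolation, and the final sign/set recombination --- and all of these check out.
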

Combining \eqref{eq:aux} and Lemma \ref{lem:Apost-Est1}, we obtain
the following result:
\begin{theorem}\label{thm:Apost-Est1} Let $(\chi-\chi_h)|_{\partial\Omega}=0$. Then
there hold
\begin{align*}
\|\nabla(u-u_h)\|^2&\leq
C\Big(\eta_1^2+\eta_2^2+\eta_3^2+\|\nabla(\chi_h-u_h)^+\|^2
+\|\nabla(\chi-\chi_h)\|^2 \Big.\\&\qquad
\Big.-\sum_{T\in\bF_h}\int_T \bar
\sigma_h(\chi_h-u_h)^-\,dx-\sum_{T\in\bC_h}\int_T\bar
\sigma_h(\chi_h-u_h)^+\,dx\Big).
\end{align*}
\end{theorem}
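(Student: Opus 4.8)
The plan is to obtain the bound by the elementary splitting
\[
u-u_h=(u-\tilde u)+(\tilde u-u_h),
\]
and then to estimate the two pieces separately using the auxiliary bound \eqref{eq:aux} and the already-established estimate for $\tilde u-u_h$ in Lemma \ref{lem:Apost-Est1}. Concretely, I would first apply the triangle inequality for the $L^2$-norm of the gradient, $\|\nabla(u-u_h)\|\leq\|\nabla(u-\tilde u)\|+\|\nabla(\tilde u-u_h)\|$, followed by the inequality $(a+b)^2\leq 2a^2+2b^2$, to reduce the claim to controlling $\|\nabla(u-\tilde u)\|^2$ and $\|\nabla(\tilde u-u_h)\|^2$.

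For the first term I would invoke the auxiliary estimate \eqref{eq:aux}, giving $\|\nabla(u-\tilde u)\|^2\leq C\|\nabla(\chi-\chi_h)\|^2$; this is exactly where the standing trace hypothesis $(\chi-\chi_h)|_{\partial\Omega}=0$ enters, since it guarantees $\chi_h^+\in\tilde K$ and hence the well-posedness of the auxiliary obstacle problem defining $\tilde u$ together with the validity of \eqref{eq:aux} from \cite{TG:2014:VIDG1}. For the second term I would simply substitute the right-hand side of Lemma \ref{lem:Apost-Est1}, which already supplies the bound in terms of $\eta_1^2+\eta_2^2+\eta_3^2+\|\nabla(\chi_h-u_h)^+\|^2$ together with the two free-boundary and contact integrals $-\sum_{T\in\bF_h}\int_T\bar\sigma_h(\chi_h-u_h)^-\,dx$ and $-\sum_{T\in\bC_h}\int_T\bar\sigma_h(\chi_h-u_h)^+\,dx$. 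These integral terms are carried over verbatim (they are nonnegative thanks to $\bar\sigma_h\leq0$ from Lemma \ref{lem:sigmaprops}, so no sign is lost in the combination).

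Finally I would collect the two contributions and absorb the factor $2$ and all intermediate constants into a single generic $C$, which yields precisely the stated inequality. There is no genuine obstacle here: the argument is a one-line triangle-inequality splitting, and the only care required is bookkeeping of constants and keeping the contact/free-boundary integrals intact from Lemma \ref{lem:Apost-Est1}. The substantive work has already been done in establishing Lemma \ref{lem:Apost-Est1} (the analogue of Theorem \ref{thm:Apost-Est} with $\chi$ replaced by $\chi_h$) and in the auxiliary perturbation bound \eqref{eq:aux}.
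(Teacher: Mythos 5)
Your proposal is correct and follows exactly the paper's own route: the paper obtains Theorem \ref{thm:Apost-Est1} by combining the auxiliary perturbation bound \eqref{eq:aux} with Lemma \ref{lem:Apost-Est1} via the splitting $u-u_h=(u-\tilde u)+(\tilde u-u_h)$. The triangle inequality, the absorption of constants, and the verbatim transfer of the contact/free-boundary integrals are all as in the paper, so there is nothing to add.
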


\begin{remark}
The difference between the estimator in Theorem
\ref{thm:Apost-Est1} and the estimator in Theorem
\ref{thm:Apost-Est} is that the min/max functions involve only
discrete functions. This provides simplicity in computations.
\end{remark}

\par
\noindent The efficiency of the error estimators $\eta_1$ and
$\eta_2$ will be followed in a similar way as in
\cite{Veeser:2001:VI}. Then the efficiency of the error estimator
$\eta_3$ is followed by the use of Lemma \ref{lem:sigmaosc}. The
efficiency of the other error estimators involving min/max
functions in Theorem \ref{thm:Apost-Est1}/\ref{thm:Apost-Est} is
less clear than in the case of linear finite element method in
\cite{Veeser:2001:VI}. This subject will be pursued in the future.

\section{A Priori Error Analysis}\label{sec:apriori}
In this section, we show under some regularity conditions that the
discrete function $\sigma_h$ converges to $\sigma$ with some rate
of convergence. Later on we derive optimal order error estimates
under an hypothesis on the contact set.

\par
\noindent First we derive some error estimates for $\sigma_h$.
\begin{theorem}\label{thm:sigma-L2-apriori}
Let $u,\,\chi \in H^{2+s}(\Omega)$ and $f\in H^s(\Omega)$ for
$0\leq s\leq 1$. Let $\sigma$ and $\sigma_h$ be defined by
$\eqref{eq:sigmadef}$ and $\eqref{eq:sigmahdef}$. Then, it holds
\begin{align*}
\|\sigma-\sigma_h\|_{L^2(\Omega)}\leq C \left( h^s \|f+\Delta
u\|_{H^s(\Omega)}+h^{-1}\|\nabla(u-u_h)\|\right).
\end{align*}
\end{theorem}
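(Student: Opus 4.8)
The plan is to first recognize that the multiplier $\sigma$ defined by \eqref{eq:sigmadef} is in fact the $L^2(\Omega)$ function $\sigma=f+\Delta u$: integrating $a(u,v)=(\nabla u,\nabla v)$ by parts (legitimate since $u\in H^{2+s}(\Omega)\subset H^2(\Omega)$) gives $\langle\sigma,v\rangle=(f+\Delta u,v)$ for all $v\in H^1_0(\Omega)$, so $\sigma=f+\Delta u\in H^s(\Omega)$ with $|\sigma|_{H^s}=|f+\Delta u|_{H^s}$. The estimate is thus a comparison of the two $L^2$ functions $\sigma$ and $\sigma_h\in V_{nc}$. The crucial structural observation is that the discrete inner product $\langle\cdot,\cdot\rangle_h$ coincides with the $L^2(\Omega)$ inner product on $V_{nc}$: for $v_h,w_h\in V_{nc}$ the product $v_hw_h$ is piecewise $P_2$, so \eqref{eq:QuadExact} gives $\langle v_h,w_h\rangle_h=(v_h,w_h)$.

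Let $P_{nc}\colon L^2(\Omega)\to V_{nc}$ be the $L^2$-orthogonal projection. Since $P_{nc}\sigma-\sigma_h\in V_{nc}$ is orthogonal to $\sigma-P_{nc}\sigma$, I would use the Pythagorean splitting
\begin{equation*}
\|\sigma-\sigma_h\|^2=\|\sigma-P_{nc}\sigma\|^2+\|P_{nc}\sigma-\sigma_h\|^2.
\end{equation*}
The first term is pure approximation: interpolating between the trivial bound at $s=0$ and the Crouzeix--Raviart estimate at $s=1$ yields $\|\sigma-P_{nc}\sigma\|\le Ch^s|\sigma|_{H^s}=Ch^s|f+\Delta u|_{H^s}$, which is controlled by the first term on the right-hand side of the claim.

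For the second term set $\rho_h:=P_{nc}\sigma-\sigma_h\in V_{nc}$. Using $(P_{nc}\sigma,\rho_h)=(\sigma,\rho_h)$, the identity $\langle\sigma_h,\rho_h\rangle_h=(\sigma_h,\rho_h)$ noted above, the definition \eqref{eq:sigmahdef} of $\sigma_h$, and the identity $(f,\Pi_h\rho_h)=(\sigma,\Pi_h\rho_h)+a(u,\Pi_h\rho_h)$ (from $\sigma=f+\Delta u$ and integration by parts against $\Pi_h\rho_h\in W_h\subset H^1_0(\Omega)$), I would arrive at the key identity
\begin{equation*}
\|\rho_h\|^2=(\sigma,\rho_h-\Pi_h\rho_h)-a(u-u_h,\Pi_h\rho_h).
\end{equation*}
In the first term, $\rho_h$ and $\Pi_h\rho_h$ share the same midpoint values, hence the same elementwise averages by \eqref{eq:QuadExact}; therefore $(\bar\sigma,\rho_h-\Pi_h\rho_h)=0$ and I may replace $\sigma$ by $\sigma-\bar\sigma$. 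Writing $\rho_h-\Pi_h\rho_h=-(\Pi_h\rho_h-\Pi_h^{-1}(\Pi_h\rho_h))$ and applying Lemma~\ref{lem:SigmaQuad} to $v_h=\Pi_h\rho_h\in W_h\subset V_h$, together with an inverse inequality and the elementwise norm equivalence $\|\Pi_h\rho_h\|_{L^2(T)}\le C\|\rho_h\|_{L^2(T)}$, gives $\|\rho_h-\Pi_h\rho_h\|_{L^2(T)}\le C\|\rho_h\|_{L^2(T)}$, so the first term is bounded by $C\|\sigma-\bar\sigma\|\,\|\rho_h\|\le Ch^s|f+\Delta u|_{H^s}\|\rho_h\|$. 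For the second term I would use Cauchy--Schwarz and the inverse estimate $\|\nabla\Pi_h\rho_h\|\le Ch^{-1}\|\Pi_h\rho_h\|\le Ch^{-1}\|\rho_h\|$ to get $|a(u-u_h,\Pi_h\rho_h)|\le Ch^{-1}\|\nabla(u-u_h)\|\,\|\rho_h\|$. Dividing by $\|\rho_h\|$ and combining with the approximation term yields the claim.

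I expect the main obstacle to be the second term: one must convert the discretely defined $\sigma_h$ into an $L^2$ pairing (via the exactness \eqref{eq:QuadExact}), extract the structural identity above, and recognize that the factor $h^{-1}$ comes precisely from the inverse inequality applied to $\Pi_h\rho_h$ in the consistency term $a(u-u_h,\Pi_h\rho_h)$. Here one tacitly uses quasi-uniformity of $\cT_h$, so that the local factors $h_T^{-1}$ may be replaced by the global $h^{-1}$; the approximation part and the average cancellation in the first term are comparatively routine.
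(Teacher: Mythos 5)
Your proposal is correct and follows essentially the same route as the paper: both identify $\sigma=f+\Delta u\in H^s(\Omega)$, exploit the exactness of the midpoint rule \eqref{eq:QuadExact} to identify $\langle\cdot,\cdot\rangle_h$ with the $L^2$ inner product on $V_{nc}$, cancel elementwise averages in the term involving $v_h-\Pi_h v_h$ (which share midpoint values), and obtain the $h^{-1}$ factor from an inverse inequality applied to $a(u-u_h,\Pi_h\cdot)$. The only cosmetic difference is that you test directly with $\rho_h=P_{nc}\sigma-\sigma_h$ via a Pythagorean split, whereas the paper runs the same identities through the duality characterization $\sup_\phi(\sigma-\sigma_h,\phi)/\|\phi\|$ with $P_h\phi$.
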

\begin{proof}
From the hypothesis $f+\Delta u \in H^s(\Omega)$, we have $\sigma
=f+\Delta u$ and $\sigma \in H^s(\Omega)$. Note that
\begin{align*}
\|\sigma-\sigma_h\|_{L^2(\Omega)} =\sup_{\phi\in
L^2(\Omega),\;\phi\neq 0}\frac{( \sigma-\sigma_h,\phi)}{\|\phi\|}.
\end{align*}
Let $\phi\in L^2(\Omega)$. Let $P_h\phi$ and $P_h\sigma$ be the
$L^2$-projections of $\phi$ and $\sigma$ on to $V_{nc}$,
respectively. A scaling argument implies that
$\|\Pi_hP_h\phi\|\leq C \|P_h\phi\|\leq C \|\phi\|$. Then we find
using \eqref{eq:sigmahdef} that
\begin{align*}
(\sigma-\sigma_h,\phi)&=(\sigma-\sigma_h,\phi-P_h\phi)+(\sigma-\sigma_h,P_h\phi)\\
&=(\sigma-P_h\sigma, \phi-P_h\phi)+(f+\Delta u,P_h\phi)-(\sigma_h,P_h\phi)\\
&=(\sigma-P_h\sigma, \phi-P_h\phi)+(f+\Delta u,P_h\phi)-(f,\Pi_hP_h\phi)+a(u_h,\Pi_hP_h\phi)\\
&=(\sigma-P_h\sigma, \phi-P_h\phi)+(f+\Delta u,P_h\phi-\Pi_hP_h\phi)+a(u_h-u,\Pi_hP_h\phi)\\
&=(\sigma-P_h\sigma, \phi-P_h\phi)+(f+\Delta u-\overline{(f+\Delta u)},\phi_h-\Pi_hP_h\phi)+a(u_h-u,\Pi_hP_h\phi)\\
 &\leq C h^s \|f+\Delta u\|_{H^s(\Omega)}\left(\|P_h\phi\|+\|\Pi_hP_h\phi\|\right)+C
\|\nabla(u-u_h)\|\;\|\nabla\Pi_hP_h\phi\|\\
&\leq C h^s \|f+\Delta
u\|_{H^s(\Omega)}\left(\|P_h\phi\|+\|\Pi_hP_h\phi\|\right)+C
h^{-1}\|\nabla(u-u_h)\|\;\|\Pi_hP_h\phi\|\\
&\leq C \left( h^s \|f+\Delta
u\|_{H^s(\Omega)}+h^{-1}\|\nabla(u-u_h)\|\right)\|\phi\|.
\end{align*}
This completes the proof.
\end{proof}
\par
Next we derive an {\em a priori} error estimate for the multiplier
$\sigma_h$ in $H^{-1}$-norm.

\begin{theorem}\label{thm:sigma-apriori}
Let $u,\,\chi \in H^{s+2}(\Omega)$ and $f\in H^s(\Omega)$ for
$0\leq s\leq 1$. Let $\sigma$ and $\sigma_h$ be defined by
$\eqref{eq:sigmadef}$ and $\eqref{eq:sigmahdef}$. Then, it holds
\begin{align*}
\|\sigma-\sigma_h\|_{H^{-1}(\Omega)}\leq C \left( h^{s+1}
\|f+\Delta u\|_{H^s(\Omega)}+\|\nabla(u-u_h)\|\right).
\end{align*}
\end{theorem}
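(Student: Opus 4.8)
The plan is to bootstrap from the $L^2$ estimate of Theorem \ref{thm:sigma-L2-apriori}. Since $\sigma=f+\Delta u\in L^2(\Omega)$ and $\sigma_h\in V_{nc}\subset L^2(\Omega)$, the duality pairing is realized by the $L^2$ inner product, so
\[
\|\sigma-\sigma_h\|_{H^{-1}(\Omega)}=\sup_{\phi\in H^1_0(\Omega),\,\phi\neq0}\frac{(\sigma-\sigma_h,\phi)}{\|\nabla\phi\|}.
\]
Fix $\phi\in H^1_0(\Omega)$ and let $\phi_h\in V_h$ be its Clément interpolant, which satisfies $\|\phi-\phi_h\|\le Ch\|\nabla\phi\|$ and $\|\nabla\phi_h\|\le C\|\nabla\phi\|$. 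The single extra power of $h$ in the target estimate (relative to the $L^2$ bound) must come from the first interpolation estimate, so I would write $(\sigma-\sigma_h,\phi)=(\sigma-\sigma_h,\phi-\phi_h)+(\sigma-\sigma_h,\phi_h)$ and treat the two pieces by entirely different means.

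For the first piece, Cauchy--Schwarz and Theorem \ref{thm:sigma-L2-apriori} give
\[
(\sigma-\sigma_h,\phi-\phi_h)\le\|\sigma-\sigma_h\|\,\|\phi-\phi_h\|\le C\big(h^s\|f+\Delta u\|_{H^s(\Omega)}+h^{-1}\|\nabla(u-u_h)\|\big)\,h\|\nabla\phi\|,
\]
and the factor $h$ exactly cancels the $h^{-1}$, producing the desired contribution $C(h^{s+1}\|f+\Delta u\|_{H^s(\Omega)}+\|\nabla(u-u_h)\|)\|\nabla\phi\|$. This is precisely why the $L^2$ estimate has to be proved first.

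For the second piece I would exploit the discrete variational structure. Since $\phi_h\in V_h$, combining the definition \eqref{eq:Gh} of $G_h$ with the identity established in the proof of Lemma \ref{lem:GhBound} yields
\[
(\sigma-\sigma_h,\phi_h)=\langle\sigma-\sigma_h,\phi_h\rangle=(\sigma_h-\bar\sigma_h,\Pi_h^{-1}\phi_h-\phi_h)-a(u-u_h,\phi_h).
\]
Estimating termwise with Lemma \ref{lem:SigmaQuad} (in the form $\|\Pi_h^{-1}\phi_h-\phi_h\|_{L^2(T)}\le Ch_T\|\nabla\phi_h\|_{L^2(T)}$), Cauchy--Schwarz over the triangles, and the $H^1$-stability of $\phi_h$, one obtains $(\sigma-\sigma_h,\phi_h)\le C(\eta_3+\|\nabla(u-u_h)\|)\|\nabla\phi\|$. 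It then remains to bound $\eta_3$ \emph{a priori}: from the triangle inequality and the $L^2$-stability of the piecewise mean, $\|\sigma_h-\bar\sigma_h\|_{L^2(T)}\le 2\|\sigma-\sigma_h\|_{L^2(T)}+\|\sigma-\bar\sigma\|_{L^2(T)}$; weighting by $h_T$, summing, and using $h_T\le h$ with the mean-approximation bound $\|\sigma-\bar\sigma\|_{L^2(T)}\le Ch_T^s|\sigma|_{H^s(T)}$ and Theorem \ref{thm:sigma-L2-apriori} gives $\eta_3\le C(h^{s+1}\|f+\Delta u\|_{H^s(\Omega)}+\|\nabla(u-u_h)\|)$. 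Collecting the two pieces and taking the supremum over $\phi$ completes the argument.

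The main conceptual obstacle is recognizing that one cannot simply multiply the $L^2$ estimate by $h$, since in general $\|w\|_{H^{-1}}\not\le Ch\|w\|_{L^2}$: the low-frequency content of $\sigma-\sigma_h$ is not oscillatory. That content is exactly what the conforming part $\phi_h$ probes, and it has to be controlled through the discrete equations—where it couples only to the data oscillation $\eta_3$ and the energy error via the identity from Lemma \ref{lem:GhBound}—rather than by a crude norm comparison. Once this split is in place, the remaining work is routine bookkeeping of interpolation and approximation constants.
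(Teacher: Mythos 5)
Your proof is correct and follows essentially the same route as the paper: the same duality argument, the same splitting $(\sigma-\sigma_h,\phi)=(\sigma-\sigma_h,\phi-\phi_h)+(\sigma-\sigma_h,\phi_h)$ with the $L^2$ bound of Theorem \ref{thm:sigma-L2-apriori} absorbing the first term and the discrete identity behind Lemma \ref{lem:GhBound} handling the conforming part. The only cosmetic differences are your use of the Cl\'ement interpolant in place of the $L^2$-projection and your routing of the term $(\sigma_h-\bar\sigma_h,\Pi_h^{-1}\phi_h-\phi_h)$ through an a priori bound on $\eta_3$ rather than directly through $h\|\sigma_h-\bar\sigma\|$; both lead to the same estimate.
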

\begin{proof}
Note that
\begin{align*}
\|\sigma-\sigma_h\|_{H^{-1}(\Omega)} =\sup_{\phi\in
H^1_0(\Omega),\;\phi\neq 0}\frac{(
\sigma-\sigma_h,\phi)}{\|\nabla\phi\|}.
\end{align*}
Let $\phi\in H^1_0(\Omega)$ and let $\phi_h$ be the
$L^2$-projection of $\phi$ on to $V_h$.  Then using
\eqref{eq:sigmahdef}, we find
\begin{align*}
(\sigma-\sigma_h,\phi)&=(\sigma-\sigma_h,\phi-\phi_h)+(\sigma-\sigma_h,\phi_h)\\
&=(\sigma-\sigma_h, \phi-\phi_h)+(f,\phi_h)-a(u,\phi_h)-(\sigma_h,\phi_h)\\
&=(\sigma-\sigma_h, \phi-\phi_h)+a(u_h-u,\phi_h)+(f,\phi_h)-a(u_h,\phi_h)-(\sigma_h,\phi_h)\\
&=(\sigma-\sigma_h, \phi-\phi_h)+a(u_h-u,\phi_h)+(\sigma_h,\Pi_h^{-1}\phi_h-\phi_h)\\
&=(\sigma-\sigma_h, \phi-\phi_h)+a(u_h-u,\phi_h)+(\sigma_h-\bar\sigma,\Pi_h^{-1}\phi_h-\phi_h)\\
&\leq C h \|\sigma-\sigma_h\|_{L^2(\Omega)}\|\nabla \phi\|+
\|\nabla(u-u_h)\|\;\|\nabla\phi_h\|+Ch\|\sigma_h-\bar\sigma\|\|\nabla\phi_h\|\\
&\leq C \left( h
\|\sigma-\sigma_h\|_{L^2(\Omega)}+\|\nabla(u-u_h)\|+h
\|\sigma-\bar\sigma\| \right)\|\nabla \phi\|\\
&\leq C \left( h
\|\sigma-\sigma_h\|_{L^2(\Omega)}+\|\nabla(u-u_h)\|+h^{s+1}
\|\sigma\|_{H^s(\Omega)} \right)\|\nabla \phi\|.
\end{align*}
Finally using Theorem \ref{thm:sigma-L2-apriori}, we complete the
proof.
\end{proof}

\par
The following {\em a priori} error estimate has been derived in
\cite{BHR:1977:VI} and \cite[Theorem 3.1]{Wang:2002:P2VI}:
\begin{align}\label{eq:uh-apriori}
\|\nabla(u-u_h)\|\leq C h^{\frac{3}{2}-\epsilon} \text{  for any
  } \epsilon>0,
\end{align}
assuming that the solution $u$ of \eqref{eq:MP} possesses the
regularity
\begin{align}\label{eq;uregu}
u\in W^{s,p}(\Omega)\quad 1<p<\infty,\quad  s<2+\frac{1}{p},
\end{align}
and the data satisfies $f\in L^\infty(\Omega)\cap H^1(\Omega)$ and
$\chi\in W^{3,3}(\bar\Omega)\cap W^{2,\infty}(\Omega)$.

\par
\noindent The following corollary is immediate from
\eqref{eq:uh-apriori}, Theorem \ref{thm:sigma-apriori} and
\ref{thm:sigma-L2-apriori}:

\begin{corollary}\label{cor:H1-Apriori1}
Let $u,\,\chi \in H^{2+s}(\Omega)$ and $f\in H^s(\Omega)$ for
$0\leq s\leq 1$. Further assume that $\eqref{eq;uregu}$ holds. Let
$\sigma$ and $\sigma_h$ be defined by $\eqref{eq:sigmadef}$ and
$\eqref{eq:sigmahdef}$. Then, there hold for any $\epsilon>0$
\begin{align*}
\|\sigma-\sigma_h\|_{L^2(\Omega)}&\leq C
\left(h^s\|f+\Delta u\|_{H^s(\Omega)}+C h^{1/2-\epsilon}\right),\\
\|\sigma-\sigma_h\|_{H^{-1}(\Omega)}&\leq C
\left(h^{1+s}\|f+\Delta u\|_{H^s(\Omega)}+C
h^{3/2-\epsilon}\right).
\end{align*}
\end{corollary}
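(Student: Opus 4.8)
The plan is to obtain both bounds by a direct substitution of the known rate for the primal error $\|\nabla(u-u_h)\|$ into the abstract estimates for $\sigma-\sigma_h$ already established in Theorems \ref{thm:sigma-L2-apriori} and \ref{thm:sigma-apriori}. Since the corollary is flagged as immediate, no genuinely new argument is required beyond bookkeeping on the powers of $h$; the real work is to verify that all hypotheses are simultaneously in force and that the free parameter $\epsilon>0$ can be carried through unchanged.

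First I would confirm that the assumptions of the corollary feed the two theorems correctly. Because $u,\chi\in H^{2+s}(\Omega)$ and $f\in H^s(\Omega)$, we have $\sigma=f+\Delta u\in H^s(\Omega)$, which is exactly the regularity required in Theorems \ref{thm:sigma-L2-apriori} and \ref{thm:sigma-apriori}, so both estimates are at our disposal. At the same time I would observe that the solution regularity \eqref{eq;uregu}, together with the data conditions under which \eqref{eq:uh-apriori} was proved in \cite{BHR:1977:VI,Wang:2002:P2VI}, supplies the primal convergence rate $\|\nabla(u-u_h)\|\le C h^{3/2-\epsilon}$ for every $\epsilon>0$.

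For the $L^2$ bound I would insert $\|\nabla(u-u_h)\|\le C h^{3/2-\epsilon}$ into the estimate of Theorem \ref{thm:sigma-L2-apriori}, so that the term $h^{-1}\|\nabla(u-u_h)\|$ becomes $C h^{1/2-\epsilon}$, giving the first inequality. For the $H^{-1}$ bound the same substitution into Theorem \ref{thm:sigma-apriori} replaces $\|\nabla(u-u_h)\|$ by $C h^{3/2-\epsilon}$, which yields the second inequality. The only point deserving care, and the nearest thing to an obstacle, is checking that the data regularity needed for \eqref{eq:uh-apriori} is compatible with the hypotheses stated here and that the same $\epsilon$ may be used throughout; once this is acknowledged, the two displays follow at once.
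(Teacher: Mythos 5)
Your proposal is correct and follows exactly the route the paper intends: the corollary is obtained by substituting the rate $\|\nabla(u-u_h)\|\leq C h^{3/2-\epsilon}$ from \eqref{eq:uh-apriori} into Theorems \ref{thm:sigma-L2-apriori} and \ref{thm:sigma-apriori}, and your attention to the compatibility of the data hypotheses for \eqref{eq:uh-apriori} is a reasonable (indeed slightly more careful) bookkeeping point that the paper leaves implicit.
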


\par
\noindent The error estimates in Corollary \ref{cor:H1-Apriori1}
are suboptimal due to the suboptimal estimate in
\eqref{eq:uh-apriori}.

\subsection{A Priori Error Analysis: Revisited}
Recently in \cite[Theorem 4.2]{WHC:2010:DGVI}, an {\em a priori}
error estimate of order $h^{3/2}$ for a quadratic DG method for
the obstacle problem has been derived assuming that the obstacle
$\chi \in H^3(\Omega)$, the force $f\in H^1(\Omega)$ and the
solution $u\in H^3(\Omega)$. We revisit the analysis under this
regularity if an optimal order error estimates may be derived.
\par
\noindent
\smallskip
The following lemma is well-known
\cite{Glowinski:2008:VI,KS:2000:VI}:

\begin{lemma}
Let $u,\,\chi\in H^2(\Omega)$ and $f\in L^2(\Omega)$. Then there
hold
\begin{enumerate}
\item $\Delta u+f \leq 0$ a.e. in  $\Omega$,

\item $-\Delta u=f$ a.e. on the set $\{u>\chi\}$,

\item $(\Delta u+f,u-\chi)=0$.
\end{enumerate}
\end{lemma}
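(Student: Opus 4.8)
The plan is to first identify the Lagrange multiplier $\sigma$ concretely. Since $u\in H^2(\Omega)$, integration by parts in \eqref{eq:sigmadef} gives $a(u,v)=-(\Delta u,v)$ for every $v\in H^1_0(\Omega)$, whence $\langle\sigma,v\rangle=(f+\Delta u,v)$. Because $f\in L^2(\Omega)$ and $\Delta u\in L^2(\Omega)$, this shows $\sigma=f+\Delta u\in L^2(\Omega)$ and reduces every duality pairing below to an $L^2$ inner product. The three claims then become the statements $\sigma\leq 0$ a.e., $\sigma=0$ a.e. on $\{u>\chi\}$, and $(\sigma,u-\chi)=0$.

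For claim (1), I would test \eqref{eq:sigma} with admissible perturbations of $u$ in nonnegative directions. Given any $\phi\in H^1_0(\Omega)$ with $\phi\geq 0$, the function $v=u+\phi$ satisfies $v\geq u\geq\chi$, so $v\in\cK$; \eqref{eq:sigma} then yields $(\sigma,\phi)\leq 0$. Since this holds for all nonnegative $\phi\in H^1_0(\Omega)$, a density argument forces $\sigma\leq 0$ a.e., which is exactly $\Delta u+f\leq 0$.

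For claim (2), the key preliminary observation is that $\{u>\chi\}$ is open: since $1\leq d\leq 3$, the embedding $H^2(\Omega)\hookrightarrow C(\bar\Omega)$ makes $u-\chi$ continuous. Then, for any $\phi\in C_c^\infty(\{u>\chi\})$, on the compact set $\operatorname{supp}\phi$ we have $u-\chi\geq\delta>0$, so for $\epsilon>0$ small enough both $v=u+\epsilon\phi$ and $v=u-\epsilon\phi$ lie in $\cK$. Feeding both into \eqref{eq:sigma} gives $\pm(\sigma,\phi)\leq 0$, hence $(\sigma,\phi)=0$; as $\phi$ ranges over functions supported in $\{u>\chi\}$, this gives $\sigma=0$ a.e. there, i.e. $-\Delta u=f$ on $\{u>\chi\}$.

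Claim (3) then follows by combining (1) and (2) pointwise: almost everywhere, either $u>\chi$, where $\sigma=0$, or $u=\chi$, where $u-\chi=0$; in both cases the product $\sigma(u-\chi)$ vanishes, so integrating gives $(\Delta u+f,u-\chi)=0$. The main obstacle is the localization in claim (2): one must ensure $\{u>\chi\}$ is genuinely open and that $\epsilon$ can be chosen uniformly over $\operatorname{supp}\phi$, which is precisely where the $H^2$ regularity together with the dimension restriction $d\leq 3$ is used. The boundary behavior of $\chi$ (only $\chi|_{\partial\Omega}\leq 0$) causes no trouble, because the test functions in (1) are added to $u$ within $H^1_0(\Omega)$ and those in (2) are compactly supported.
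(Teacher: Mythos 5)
Your proof is correct. The paper does not actually prove this lemma---it states it as well-known and cites Glowinski and Kinderlehrer--Stampacchia---and your argument is precisely the standard one from those references: identify $\sigma=f+\Delta u\in L^2(\Omega)$ via integration by parts, test the variational inequality \eqref{eq:sigma} with $u+\phi$ for $\phi\geq 0$ to get the sign condition, localize with compactly supported perturbations on the open set $\{u>\chi\}$ (using $H^2(\Omega)\hookrightarrow C(\bar\Omega)$ for $d\leq 3$) to get the complementarity on the non-contact set, and combine the two pointwise for the third identity. The only points requiring care---openness of $\{u>\chi\}$, uniform choice of $\epsilon$ over $\operatorname{supp}\phi$, and the density argument for the sign of $\sigma$---are all addressed.
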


\par
\noindent The error analysis below is based on the following sets:
\begin{align*}
\Omega_h^c&=\cup\{T\in \cT_h: u=\chi\text{ on } T\},\\
\Omega_h^n&=\cup\{T\in \cT_h: u >\chi\text{ on } \text{int}(T)\},\\
\Omega_h^f&=\Omega\backslash(\Omega_h^c\cup\Omega_h^n),
\end{align*}
where $\text{int}(T)$ is the interior of  $T$.

\par
For the rest of the article, we assume the following:
\par
\noindent {\bf Assumption (F)}: We assume that in each $T \subset
\bar\Omega_h^f$, there is a neighborhood $O\subset T$ such that
$u\equiv\chi$ in $O$.

\par
The above assumption $({\bf F})$ means that the contact set
$\{u\equiv \chi\}$ does not degenerate to a curve in any part of
the domain $\Omega$.

\begin{lemma}\label{lem:PoincareFree}
Let the assumption $({\bf F})$ holds and let $u,\,\chi \in
H^3(\Omega)$. Then for any $T\in \cT_h$ with $T\subset
\bar\Omega_h^f$, it holds that
$$\|u-\chi\|_{H^1(T)}\leq C h_T^2 |u-\chi|_{H^3(T)}.$$
\end{lemma}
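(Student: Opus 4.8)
The plan is to reduce the estimate to the Bramble--Hilbert lemma, exploiting that $w:=u-\chi$ vanishes identically on a full-dimensional piece of $T$. First I would record the consequences of Assumption $({\bf F})$: since $u\equiv\chi$ on the open set $O\subset T$ and $w=u-\chi\in H^3(T)$, all weak derivatives $D^\alpha w$ with $|\alpha|\le 2$ vanish a.e.\ on $O$. In particular $O$ contains a ball $B$ of some radius $\rho$ on which $w$, $\nabla w$ and $D^2w$ all vanish.

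Next I would introduce the averaged Taylor polynomial $Q$ of degree $\le 2$ of $w$, formed by averaging the second-order Taylor expansion of $w$ against a smooth weight supported in $B$ (as in Brenner--Scott). Because every derivative $D^\alpha w$, $|\alpha|\le 2$, that enters the definition of $Q$ is identically zero on $B$, the polynomial $Q$ is the zero polynomial. The Bramble--Hilbert lemma then gives, for $m=0,1$,
\[
|w|_{H^m(T)}=|w-Q|_{H^m(T)}\le C\,h_T^{\,3-m}\,|w|_{H^3(T)},
\]
with $C$ depending only on the shape of $T$ and on the chunkiness parameter $h_T/\rho$. Taking $m=1$ yields $|w|_{H^1(T)}\le C h_T^2|w|_{H^3(T)}$, and taking $m=0$ yields $\|w\|_{L^2(T)}\le C h_T^3|w|_{H^3(T)}\le C h_T^2|w|_{H^3(T)}$ after using $h_T\le\mathrm{diam}(\Omega)$. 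Adding the two and recalling $w=u-\chi$ gives the claim.

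An equivalent route avoids the averaged Taylor polynomial and applies the Poincar\'e--Friedrichs inequality three times in succession: since $D^2w=0$ on $O$, $\|D^2w\|_{L^2(T)}\le C h_T|w|_{H^3(T)}$; since $\nabla w=0$ on $O$, $\|\nabla w\|_{L^2(T)}\le C h_T\|D^2w\|_{L^2(T)}$; and since $w=0$ on $O$, $\|w\|_{L^2(T)}\le C h_T\|\nabla w\|_{L^2(T)}$. Chaining these produces the same powers of $h_T$. Both arguments are most transparent after scaling $T$ to a fixed reference triangle $\hat T$, where the estimate becomes $\|\hat w\|_{H^1(\hat T)}\le C|\hat w|_{H^3(\hat T)}$ for $\hat w$ vanishing on the image $\hat O$ of $O$.

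The main obstacle is uniformity of $C$ over all free-boundary elements. Both the chunkiness parameter in the Bramble--Hilbert route and the Poincar\'e--Friedrichs constant in the alternative depend on the relative size $|O|/|T|$ of the contact region inside $T$; equivalently, the compactness (Rellich) argument on $\hat T$ that forces the limiting $P_2$ polynomial to vanish---a nonzero quadratic cannot vanish on an open set---produces a constant depending on $\hat O$. To conclude with a constant independent of $T$, the non-degeneracy in Assumption $({\bf F})$ must be read quantitatively, namely that $O$ contains a ball of radius comparable to $h_T$ (so $|O|\gtrsim|T|$); this is precisely the sense in which the contact set must not thin out to a curve. I would make this quantitative form explicit and carry the resulting uniform chunkiness bound through the scaling.
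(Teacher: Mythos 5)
Your proof is correct and follows essentially the same route as the paper, which likewise notes that all weak derivatives of $u-\chi$ up to order $3$ vanish on $O$ and then appeals to ``compactness and scaling arguments''---precisely the Bramble--Hilbert / iterated Poincar\'e argument you spell out. The one substantive addition is your final paragraph: the qualitative Assumption $({\bf F})$ does not by itself yield a constant uniform over all free-boundary elements, and your quantitative reading (that $O$ must contain a ball of radius comparable to $h_T$) is exactly what the paper's scaling argument silently requires.
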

\begin{proof}
The imbedding $H^3(\Omega)\subset C^1(\bar\Omega)$ implies that
$u-\chi\in C^1(\bar\Omega)$. For any $T\subset \bar\Omega_h^f$,
there exists a neighborhood $O\subset T$ such that $(u-\chi)\equiv
0$ on $O$. This implies all the weak derivatives
$D^\alpha(u-\chi)\equiv 0$ on $O$ (a.e.) for $|\alpha|\leq 3$.
Using compactness and scaling arguments, it can be easily shown
that
\begin{align*}
|u-\chi|_{H^1(T)}\leq C h_T^2 |u-\chi|_{H^3(T)}.
\end{align*}
This completes the proof.
\end{proof}
\par
Below, we denote by $I_hu \in V_h$, the standard $P_2$-Lagrange
interpolation of $u$. We now prove the optimal error estimate.
\begin{theorem}\label{thm:H1-Apriori}
Let the assumption $({\bf F})$ holds  and let $u,\,\chi \in
H^3(\Omega)$ and $f\in H^1(\Omega)$. Then, it holds
\begin{align*}
\|\nabla(u-u_h)\| \leq C h^2
\left(\|\chi\|_{H^3(T)}+\|u\|_{H^3(T)}+\|f\|_{H^1(\Omega)}\right).
\end{align*}
\end{theorem}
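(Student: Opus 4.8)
The plan is to run a Falk-type energy argument resting on the coercivity of $a(\cdot,\cdot)$, the two variational inequalities \eqref{eq:MP} and \eqref{eq:DP}, and the fact that under the present regularity the multiplier in \eqref{eq:sigmadef} is the $L^2$-function $\sigma=f+\Delta u$ satisfying $\sigma\le0$ and the complementarity $(\sigma,u-\chi)=0$ (the well-known lemma of this section). Starting from $\|\nabla(u-u_h)\|^2=a(u-u_h,u-u_h)$, I insert an arbitrary $v_h\in\cK_h$, bound $-a(u_h,v_h-u_h)\le-(f,v_h-u_h)$ by the discrete inequality \eqref{eq:DP}, and replace $a(u,\cdot)$ by $(f,\cdot)-(\sigma,\cdot)$, reaching the basic estimate
\begin{align*}
\|\nabla(u-u_h)\|^2\le a(u-u_h,u-v_h)+(\sigma,u_h-v_h)\qquad\forall\,v_h\in\cK_h.
\end{align*}
Since $u\ge\chi$ everywhere, $I_hu(z)=u(z)\ge\chi(z)$ at every midpoint, so $v_h=I_hu\in\cK_h$ is admissible. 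The first term is then absorbed via Young's inequality and the standard quadratic interpolation bound, $a(u-u_h,u-I_hu)\le\tfrac14\|\nabla(u-u_h)\|^2+Ch^4|u|_{H^3(\Omega)}^2$.

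The whole difficulty lies in the consistency term $(\sigma,u_h-I_hu)=\sum_{T}\int_T\sigma(u_h-I_hu)\,dx$, which I would split over the three element classes. On $T\subset\Omega_h^n$ the lemma gives $\sigma=f+\Delta u=0$, so these elements drop out. On $T\subset\Omega_h^c$ one has $u\equiv\chi$, hence $I_hu=\chi_h$, and I write $\int_T\sigma(u_h-\chi_h)=\bar\sigma\int_T(u_h-\chi_h)\,dx+\int_T(\sigma-\bar\sigma)(u_h-\chi_h)\,dx$. The crucial gain is a sign: since $u_h-\chi_h\in\bbP_2(T)$ and $u_h(z)\ge\chi(z)=\chi_h(z)$ at the midpoints, the quadrature identity \eqref{eq:QuadExact} gives $\int_T(u_h-\chi_h)\,dx=\tfrac{|T|}{3}\sum_{z\in\cM_T}(u_h(z)-\chi(z))\ge0$, and together with $\bar\sigma\le0$ the mean part is $\le0$, hence harmless. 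For the remainder I use $\|\sigma-\bar\sigma\|_{L^2(T)}\le Ch_T|\sigma|_{H^1(T)}$ and, since $\sigma-\bar\sigma$ has zero mean, subtract the mean of $u_h-\chi_h$ to gain an extra $h_T$, controlling it by $Ch_T^2|\sigma|_{H^1(T)}\|\nabla(u_h-\chi_h)\|_{L^2(T)}\le Ch_T^2|\sigma|_{H^1(T)}(\|\nabla(u-u_h)\|_{L^2(T)}+Ch_T^2(\ldots))$, which after summation yields another $\tfrac14\|\nabla(u-u_h)\|^2+Ch^4$.

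The crux is $T\subset\bar\Omega_h^f$, and this is exactly where Assumption (F) and Lemma \ref{lem:PoincareFree} enter, because here $I_hu\neq\chi_h$. I first write $\int_T\sigma(u_h-I_hu)\,dx=\int_T\sigma(u_h-\chi_h)\,dx-\int_T\sigma\,I_h(u-\chi)\,dx$. The first summand is treated verbatim as on the contact elements, with Lemma \ref{lem:PoincareFree} supplying $\|\nabla(u-\chi)\|_{L^2(T)}\le Ch_T^2|u-\chi|_{H^3(T)}$ to absorb the error of testing $\chi_h$ against $u$. For the second summand I invoke the pointwise complementarity $\sigma(u-\chi)=0$ to convert it into $-\int_T\sigma\bigl(I_h(u-\chi)-(u-\chi)\bigr)\,dx$, i.e.\ $\sigma$ tested against an interpolation error; its oscillation part is $O(h^4)$ by the same zero-mean device, while for its mean part I use the superapproximation that Assumption (F) provides — because $u-\chi$ vanishes with all its derivatives on the subregion $O\subset T$, iterating Poincaré (as in the proof of Lemma \ref{lem:PoincareFree}) gives the sharper $\|u-\chi\|_{L^2(T)}\le Ch_T^3|u-\chi|_{H^3(T)}$, so that $\int_TI_h(u-\chi)\,dx=O(h_T^4)$. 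Summing all contributions, absorbing the two $\tfrac14\|\nabla(u-u_h)\|^2$ terms, and collecting the $H^3$-norms of $u,\chi$ and the $H^1$-norm of $f$ then yields the claimed $O(h^2)$ bound.

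I expect the free-boundary elements to be the main obstacle: without Assumption (F) the contact set could collapse to a curve, the superapproximation would fail, and the argument would only recover the suboptimal rate $h^{3/2}$ of \eqref{eq:uh-apriori}. The delicate point I would have to verify carefully is that the factor $\sigma$ surviving on $\bar\Omega_h^f$ is small enough to close the estimate at order $h^4$ rather than $h^3$; here I would exploit that $\sigma$ vanishes identically on the (open) non-contact part $\{u>\chi\}$ of each free element, so that a Poincaré argument across the $O(h)$-wide free-boundary layer gives $\|\sigma\|_{L^2(\Omega_h^f)}\le Ch|\sigma|_{H^1(\Omega)}$, which combines with the superapproximation of $u-\chi$ to push the free-element contribution to the optimal order.
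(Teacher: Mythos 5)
Your proposal is correct and follows essentially the same route as the paper: the Falk-type estimate with $v_h=I_hu\in\cK_h$, the splitting of $(\sigma,u_h-I_hu)$ over $\Omega_h^c$, $\Omega_h^n$, $\Omega_h^f$, the sign argument on contact elements combining $\bar\sigma\le 0$ with the midpoint quadrature identity \eqref{eq:QuadExact}, and on free-boundary elements the complementarity $\sigma(u-\chi)=0$ together with the Poincar\'e/superapproximation consequences of Assumption (F) (Lemma \ref{lem:PoincareFree} and the vanishing of $\sigma$ on an open subset of each such element). Your regrouping $(u_h-\chi_h)-I_h(u-\chi)$ is just a rewriting of the paper's decomposition $(I_hu-u)+(u-\chi)+(\chi-I_h\chi)+(I_h\chi-u_h)$, so the two arguments coincide in substance.
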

\begin{proof}
Since $I_hu\in K_h$, we note using \eqref{eq:DP} that
\begin{align*}
\|\nabla(u-u_h)\|^2&=
a(u-u_h,u-u_h)=a(u-u_h,u-I_hu)+a(u-u_h,I_hu-u_h)\\
&\leq a(u-u_h,u-I_hu)+a(u,I_hu-u_h)-(f,I_hu-u_h)\\
&=a(u-u_h,u-I_hu)-(\Delta u+f,I_hu-u_h).
\end{align*}
Let $\sigma=\Delta u+f$. Then $\sigma\in L^2(\Omega)$. Since
$\sigma=0$ on $\Omega_h^n$, we have
\begin{align*}
(\Delta u+f,I_hu-u_h)&=\sum_{T\subset\Omega_h^c} \int_T \sigma
(I_hu-u_h)\,dx+\sum_{T\subset\Omega_h^f}\int_T \sigma
(I_hu-u_h)\,dx.
\end{align*}
For $T\subset\Omega_h^c$, we have $u\equiv \chi$ on $T$ and hence
\begin{align*}
\int_T \sigma (I_hu-u_h)\,dx &=\int_T \sigma (I_h\chi-u_h)\,dx
\leq \int_T
(\sigma-\bar \sigma)(I_h\chi-u_h)\,dx\\
&=\int_T (\sigma-\bar \sigma)(I_hu-u_h)\,dx\\
&=\int_T (\sigma-\bar \sigma)\big((I_hu-u_h)-\overline{(I_hu-u_h)}\big)\,dx\\
&\leq C h_T^2 \|\nabla \sigma\|_{L^2(T)} \|\nabla
(I_hu-u_h)\|_{L^2(T)}
\end{align*}
For $T\subset\Omega_h^f$, we have
\begin{align*}
\int_T \sigma (I_hu-u_h)\,dx &=\int_T \sigma
(I_hu-u+u-\chi+\chi-I_h\chi+I_h\chi-u_h)\,dx
\end{align*}
Since $\sigma\equiv 0$ on a subset of $T$ of measure nonzero, we
have as in Lemma \ref{lem:PoincareFree} that
\begin{align*}
\int_T \sigma (I_hu-u+\chi-I_h\chi)\,dx &\leq C h_T^4 \|\nabla
\sigma\|_{L^2(T)} \left(\|u\|_{H^3(T)}+\|\chi\|_{H^3(T)}\right).
\end{align*}
Note that for $T\subset\Omega_h^f$ (indeed for any $T$), we have
\begin{align*}
\int_T \sigma (u-\chi)\,dx=0.
\end{align*}
Finally,
\begin{align*}
\int_T \sigma (I_h\chi-u_h)\,dx &\leq \int_T
(\sigma-\bar \sigma)(I_h\chi-u_h)\,dx\\
&=\int_T (\sigma-\bar \sigma)\big((I_h\chi-u_h)-\overline{(I_h\chi-u_h)}\big)\,dx\\
&\leq C h_T^2 \|\nabla \sigma\|_{L^2(T)} \|\nabla
(I_h\chi-u_h)\|_{L^2(T)},
\end{align*}
and then by using triangle inequality, Lemma
\ref{lem:PoincareFree} and the interpolation estimates for $I_h$,
we find
\begin{align*}
\|\nabla (I_h\chi-u_h)\|_{L^2(T)} &\leq  \|\nabla
(I_h\chi-\chi)\|_{L^2(T)}+\|\nabla(\chi-u)\|_{L^2(T)}+\|\nabla
(u-u_h)\|_{L^2(T)}\\
&\leq C h_T^2
\left(\|\chi\|_{H^3(T)}+\|u-\chi\|_{H^3(T)}\right)+\|\nabla
(u-u_h)\|_{L^2(T)}.
\end{align*}
Therefore
$$
\|\nabla(u-u_h)\| \leq C h^2
\left(\|\chi\|_{H^3(T)}+\|u\|_{H^3(T)}+\|f\|_{H^1(\Omega)}\right).
$$
This completes the proof.
\end{proof}

We deduce the following corollary  using the results in Theorems
\ref{thm:H1-Apriori}, \ref{thm:sigma-apriori} and
\ref{thm:sigma-L2-apriori}.
\begin{corollary}\label{cor:H1-Apriori}
Let the assumption $({\bf F})$ holds and let $u,\,\chi \in
H^3(\Omega)$ and $f\in H^1(\Omega)$. Let $\sigma$ and $\sigma_h$
be defined by $\eqref{eq:sigmadef}$ and $\eqref{eq:sigmahdef}$.
Then, there hold
\begin{align*}
\|\sigma-\sigma_h\|_{L^2(\Omega)}&\leq C h
\left(\|\chi\|_{H^3(T)}+\|u\|_{H^3(T)}+\|f\|_{H^1(\Omega)}\right),\\
\|\sigma-\sigma_h\|_{H^{-1}(\Omega)}&\leq C h^2
\left(\|\chi\|_{H^3(T)}+\|u\|_{H^3(T)}+\|f\|_{H^1(\Omega)}\right).
\end{align*}
\end{corollary}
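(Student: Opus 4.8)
The plan is to obtain both bounds by specializing Theorems \ref{thm:sigma-L2-apriori} and \ref{thm:sigma-apriori} to $s=1$ and then inserting the optimal energy-norm estimate of Theorem \ref{thm:H1-Apriori}. The hypotheses $u,\chi\in H^3(\Omega)$ and $f\in H^1(\Omega)$ are precisely those of the three cited results with $s=1$, since $H^3(\Omega)=H^{2+1}(\Omega)$ and $f\in H^1(\Omega)=H^s(\Omega)$; moreover Assumption $({\bf F})$ is in force, so Theorem \ref{thm:H1-Apriori} applies. Throughout I would use that $\sigma=f+\Delta u\in H^1(\Omega)$ under these hypotheses (as established in the proof of Theorem \ref{thm:sigma-L2-apriori}) together with the elementary bound
$$\|f+\Delta u\|_{H^1(\Omega)}\leq \|f\|_{H^1(\Omega)}+\|\Delta u\|_{H^1(\Omega)}\leq \|f\|_{H^1(\Omega)}+C\|u\|_{H^3(\Omega)},$$
so that every occurrence of $\|f+\Delta u\|_{H^s(\Omega)}$ with $s=1$ can be replaced by the right-hand-side quantities appearing in the statement.

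For the $L^2$-bound, I would start from Theorem \ref{thm:sigma-L2-apriori} with $s=1$,
$$\|\sigma-\sigma_h\|_{L^2(\Omega)}\leq C\left(h\,\|f+\Delta u\|_{H^1(\Omega)}+h^{-1}\|\nabla(u-u_h)\|\right),$$
and substitute the estimate $\|\nabla(u-u_h)\|\leq C h^2\big(\|\chi\|_{H^3(T)}+\|u\|_{H^3(T)}+\|f\|_{H^1(\Omega)}\big)$ from Theorem \ref{thm:H1-Apriori}. The critical point is that the $h^{-1}$ prefactor multiplying the energy error is exactly cancelled by the quadratic decay $h^2$, producing a factor $h$; together with the $h$ already present on the first term this yields the claimed linear rate.

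For the $H^{-1}$-bound, the same substitution is applied to Theorem \ref{thm:sigma-apriori} with $s=1$,
$$\|\sigma-\sigma_h\|_{H^{-1}(\Omega)}\leq C\left(h^{2}\,\|f+\Delta u\|_{H^1(\Omega)}+\|\nabla(u-u_h)\|\right);$$
here no negative power of $h$ occurs, so inserting $\|\nabla(u-u_h)\|\leq C h^2(\cdots)$ directly gives the $h^2$ rate, matching the $h^2$ weight already carried by the data term. Collecting constants in each case completes the proof. Since the argument is a direct substitution, there is no genuine obstacle; the only point requiring care is the bookkeeping of the $h^{-1}$ factor in the $L^2$ estimate, which is precisely where the optimal (rather than merely suboptimal) energy estimate of Theorem \ref{thm:H1-Apriori} is needed.
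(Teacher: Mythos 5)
Your proposal is correct and follows exactly the route the paper intends: the corollary is stated as an immediate consequence of Theorems \ref{thm:sigma-L2-apriori} and \ref{thm:sigma-apriori} with $s=1$ combined with the optimal energy estimate of Theorem \ref{thm:H1-Apriori}, and your bookkeeping of the $h^{-1}\cdot h^2$ cancellation in the $L^2$ bound and the direct $h^2$ insertion in the $H^{-1}$ bound is precisely what is needed.
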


\section{Numerical Experiments}\label{sec:Numerics}
In this section, we discuss some numerical experiments using two
model problems.
\par
\noindent {\bf Model Example 1:} Let $\Omega=(-1.5,1.5)^2$,
$f=-2$, $\chi:=0$ and $u=r^2/2-\text{ln}(r)-1/2$ on $\p\O$, where
$r^2=x^2+y^2$ for $(x,y)\in \R^2$. Then the exact solution $u$ is
given by
\begin{equation*}
u := \left\{ \begin{array}{ll} r^2/2-\text{ln}(r)-1/2, &  \text{ if }  r\geq 1\\\\
0, &  \text{ if }  r<1.
\end{array}\right.
\end{equation*}

\par
\noindent {\bf Model Example 2:} Let $\Omega$ be the square with
corners $\{(-1,0),(0,-1),(1,0),(0,1)\}$ and the obstacle function
to be $\chi=1-2r^2$, where $r=\sqrt{x^2+y^2}$. The load function
$f$ is taken to be
\begin{equation*}
f(r) := \left\{ \begin{array}{ll} 0 & \text{ if } r< r_0,\\\\
4r_0/r & \text{ if } r\geq r_0,
\end{array}\right.
\end{equation*}
so that the solution $u$ takes the form
\begin{equation*}
u(r):= \left\{ \begin{array}{ll} 1-2r^2 & \text{ if } r < r_0,\\\\
4r_0(1-r) & \text{ if } r \geq r_0,
\end{array}\right.
\end{equation*}
where $r_0=(\sqrt{2}-1)/\sqrt{2}$.

\par
\noindent The model problem \eqref{eq:MP} is considered in the
analysis with homogeneous boundary condition for avoiding
additional technical difficulties. However the error analysis in
the paper is still valid up to some higher order terms involving
the nonhomogeneous boundary condition.

\par
\noindent Firstly, we test the order of convergence under the
uniform refinement. Since the exact solutions are not
$H^3(\Omega)$ regular, the energy norm error will be convergent at
suboptimal rate. This can be clearly seen in the Tables
\ref{table:Ex1} and \ref{table:Ex2}. However we will see in the
numerical experiments using adaptive refinement that the errors
converge with optimal order ($1/N$, where $N$=number of degrees of
freedom). This demonstrates the optimal performance of the
quadratic fem for obstacle problem.

\begin{table}[h!!]
 {\small{\footnotesize
\begin{center}
\begin{tabular}{|c|c|c|c|c|c|c|c|}\hline
   $h$ & $\|\nabla(u-u_h)\| $  & order of conv.\\
\hline\\[-12pt]  &&\\[-8pt]
3/4  &0.359703822003801 & --     \\
3/8  &0.127058164618133 &1.501       \\
3/16 &0.058540022081520 &1.117       \\
3/32 &0.017334877653178 &1.755   \\
3/64 &0.004870365957461 &1.831     \\
3/128&0.001950822843142 &1.319      \\
3/256&0.000781008462447 &1.320   \\
\hline
\end{tabular}
\end{center}
}}
\par\medskip
\caption{Error and orders of convergence for Example 1 }
\label{table:Ex1}
\end{table}

\begin{table}[h!!]
 {\small{\footnotesize
\begin{center}
\begin{tabular}{|c|c|c|c|c|c|c|c|}\hline
   $h$ & $\|\nabla(u-u_h)\| $  & order of conv.\\
\hline\\[-12pt]  &&\\[-8pt]
3/4  &0.206211469561149 & --     \\
3/8  &0.058342275497902 &1.821       \\
3/16 &0.025124856349493 &1.215       \\
3/32 &0.007971135017375 &1.656   \\
3/64 &0.002583671405642 &1.625     \\
3/128&0.000931014496396 &1.472      \\
3/256&0.000323678406046 &1.524   \\
\hline
\end{tabular}
\end{center}
}}
\par\medskip
\caption{Error and orders of convergence for Example 2}
\label{table:Ex2}
\end{table}

We now conduct tests on adaptive algorithm. For this, we consider
an initial mesh with four right-angled cris-cross mesh for both
the examples. Then we use the adaptive algorithm consisting of
four successive modules
\begin{equation*}
{\bf SOLVE}\rightarrow  {\bf ESTIMATE} \rightarrow {\bf
MARK}\rightarrow {\bf REFINE}
\end{equation*}
We use the primal-dual active set strategy
\cite{HK:2003:activeset} in the step SOLVE to solve the discrete
obstacle problem. The estimator in Theorem \ref{thm:Apost-Est1} is
computed in the step ESTIMATE and then the D\"orfler's marking
strategy \cite{Dorfler:1996:Afem} with parameter $\theta=0.3$ has
been used in the step MARK to mark the elements for refinement.
Using the newest vertex bisection algorithm, we refine the mesh
and obtain a new mesh.

\par
\noindent The convergence history of errors and estimators is
depicted in Figure \ref{fig:ErrEst1} and \ref{fig:ErrEst2} for
Example 1 and 2, respectively. These figures illustrate the
optimal order convergence as well as the reliability of the error
estimator. The efficiency indices can be seen in Figures
\ref{fig:EI1} and \ref{fig:EI2}. The free boundary sets for both
the examples have been captured by the error estimator very
efficiently, see Figures \ref{fig:Mesh1} and \ref{fig:Mesh2}.

\par
\noindent {\bf Heuristic comments on the optimal order
convergence.} In our first experiment using uniform refinement, we
found only suboptimal rate of convergence due to lack of the
regularity of the solutions. It is well known that the adaptive
schemes restore the optimal rate of the method even for the
problems with irregular solutions. We find the same in our
experiments. Heuristically this explains the optimal rate {\em a
priori} error estimates in the Section \ref{sec:apriori}.

\begin{figure}[t]
\begin{center}
\includegraphics*[width=10cm,height=7cm]{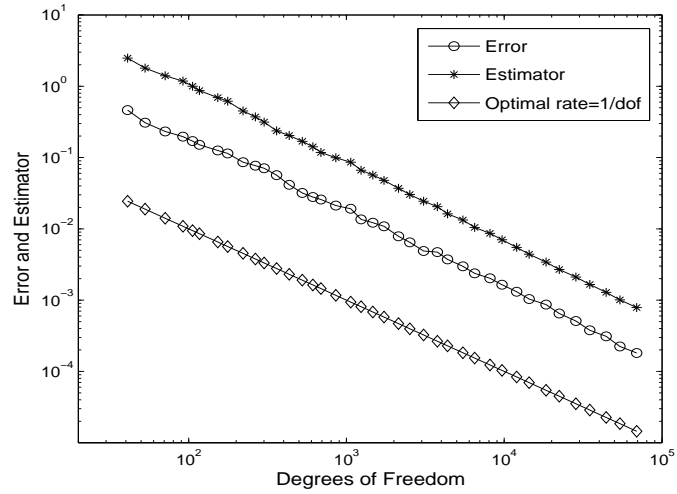}
\caption{{\large Errors and Estimators for Example 1}}
\label{fig:ErrEst1}
\end{center}
\end{figure}

\begin{figure}[t]
\begin{center}
\includegraphics*[width=10cm,height=7cm]{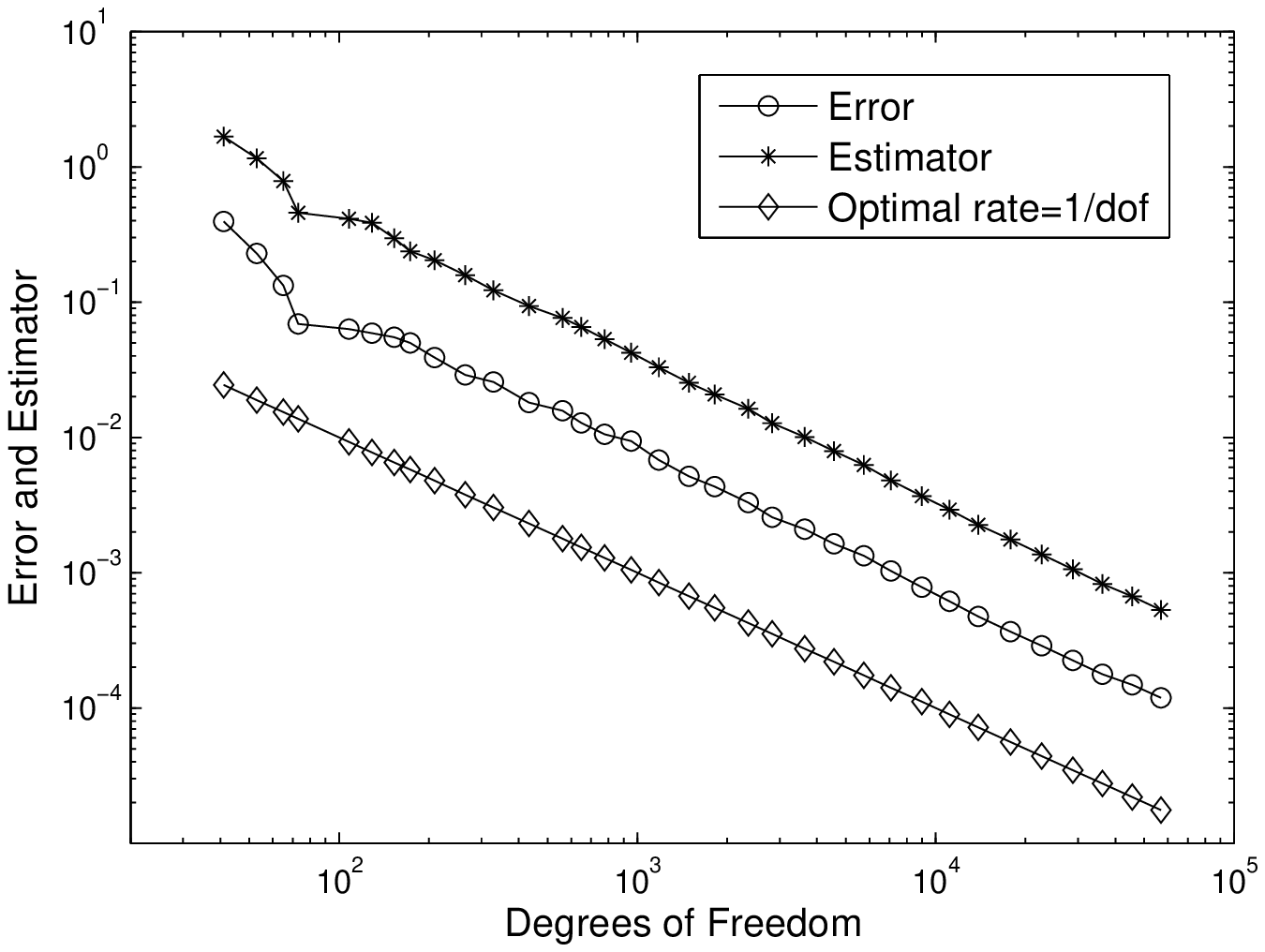}
\caption{{\large Errors and Estimators for Example 2}}
\label{fig:ErrEst2}
\end{center}
\end{figure}

\begin{figure}[t]
\begin{center}
\includegraphics*[width=10cm,height=7cm]{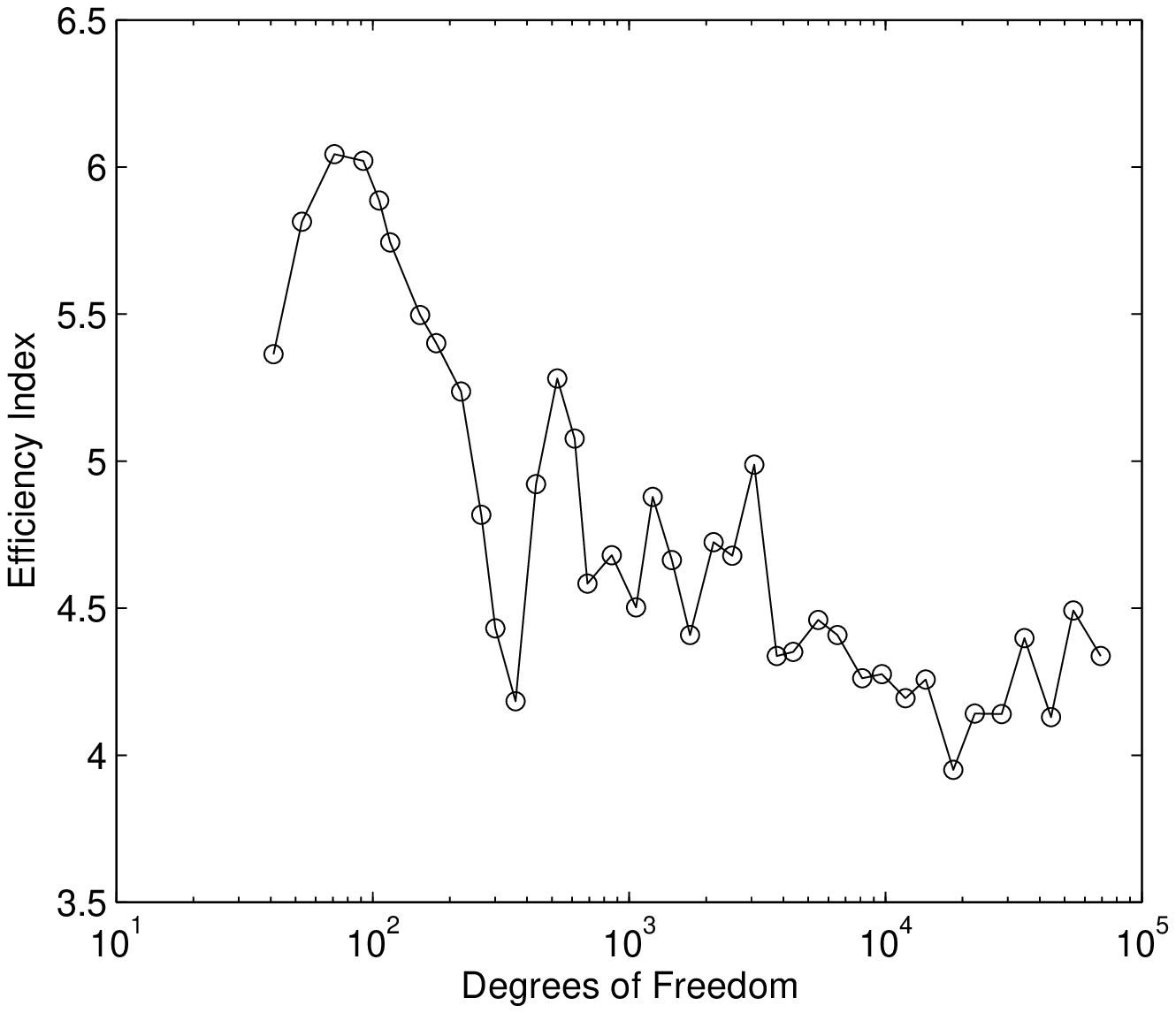}
 \caption{{\large Efficiency Index for Example 1}} \label{fig:EI1}
\end{center}
\end{figure}

\begin{figure}[t]
\begin{center}
\includegraphics*[width=10cm,height=7cm]{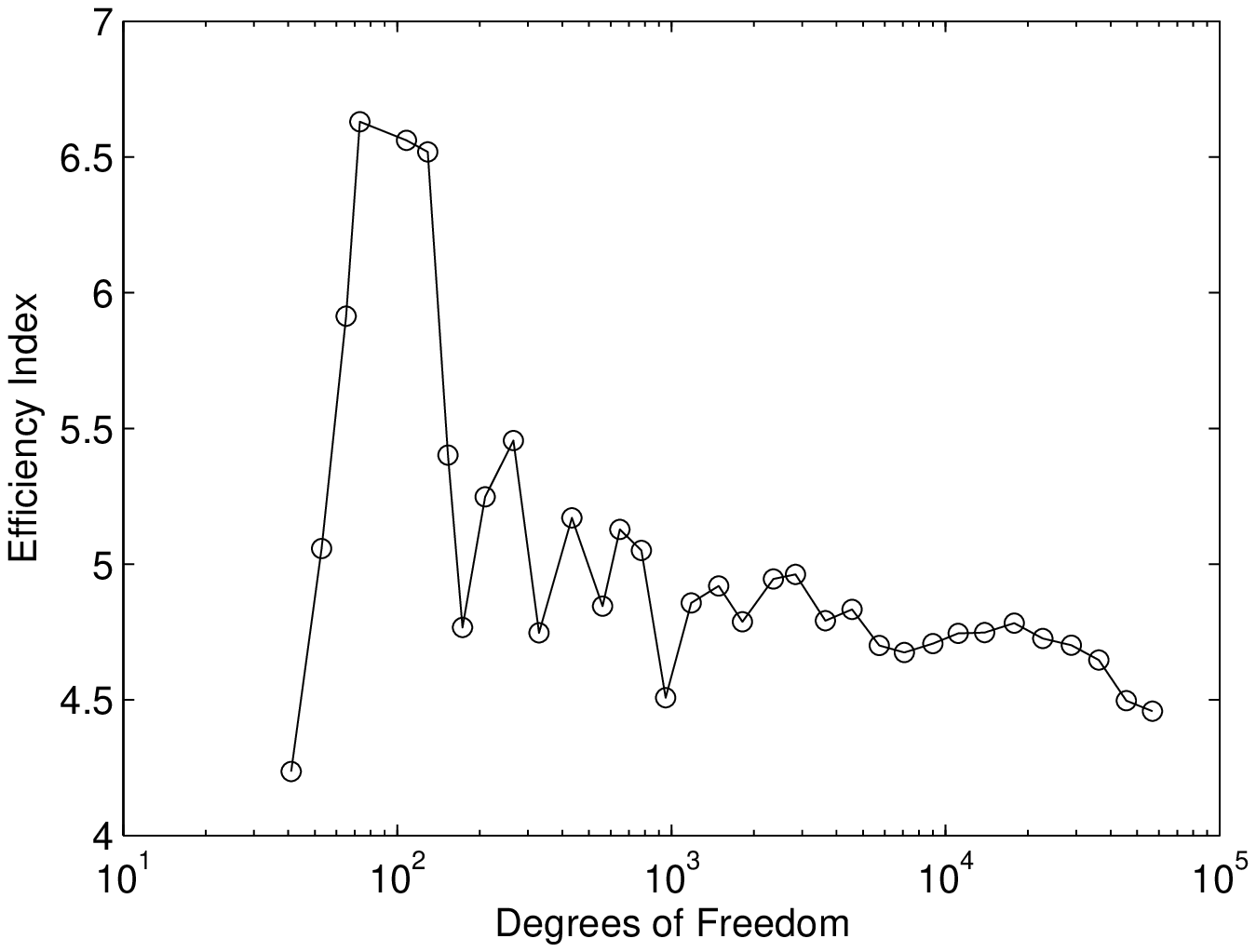}
 \caption{{\large Efficiency Index for Example 2}} \label{fig:EI2}
\end{center}
\end{figure}

\begin{figure}[t]
\begin{center}
\includegraphics*[width=10cm,height=7cm]{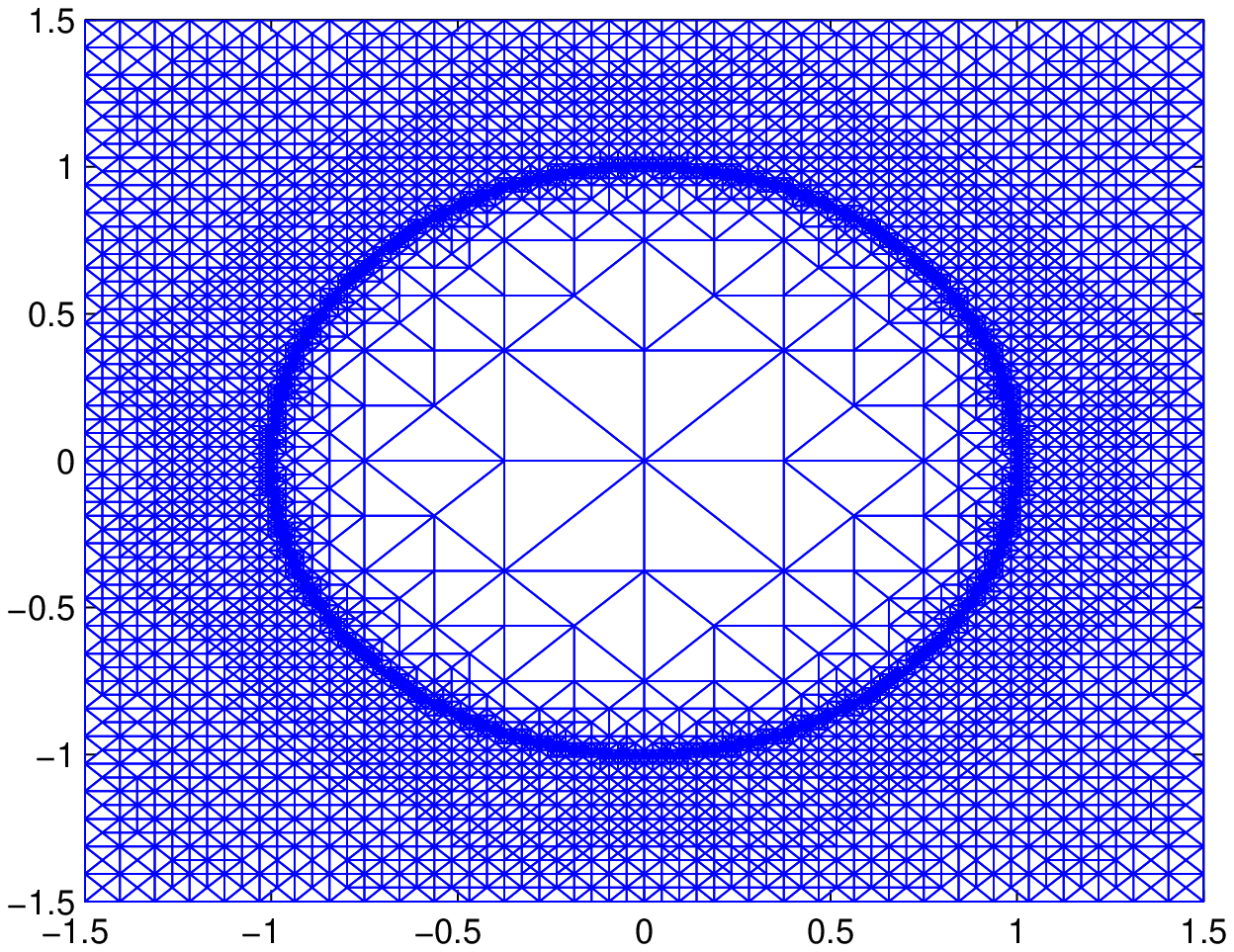}
 \caption{{\large Mesh at intermediate level for Example 1}} \label{fig:Mesh1}
\end{center}
\end{figure}

\begin{figure}[t]
\begin{center}
\includegraphics*[width=10cm,height=7cm]{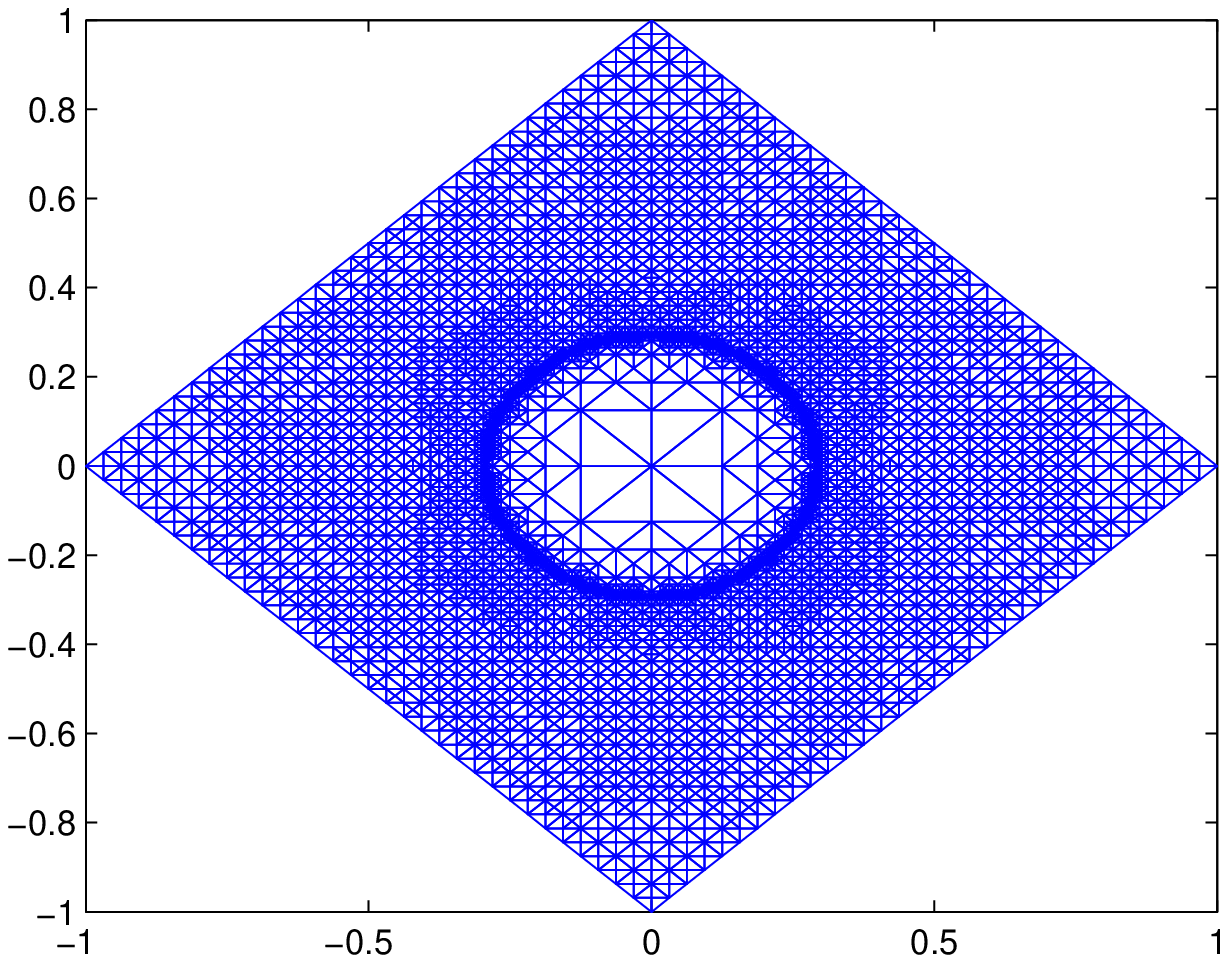}
 \caption{{\large Mesh at intermediate level for Example 2}} \label{fig:Mesh2}
\end{center}
\end{figure}

\section{Conclusions}\label{sec:Conclusions}
For the first time, residual based {\em a posteriori} error
estimator has been derived for the quadratic finite element method
for the elliptic obstacle problem. The estimator is shown to be
reliable. The efficiency of the error estimator in this case is
less clear than in the case of linear fem, we leave this subject
to future investigation. The error estimator involves a discrete
Lagrange multiplier which is shown to be optimally convergent to
the continuous one whenever the solution $u$, obstacle $\chi$ and
the force $f$ are sufficiently smooth and the contact set does not
degenerate to a curve in any part of the domain. Also under this
assumption, we show that the quadratic fem for obstacle problem is
indeed optimal. Numerical experiments with adaptive refinement
exhibit this optimal convergence rate.

%

\end{document}